\newtheorem{theorem}{Theorem}  
\newtheorem{lemma}[theorem]{Lemma}
\newtheorem{proposition}[theorem]{Proposition}
\newtheorem{conjecture}[theorem]{Conjecture}
\def\COMMENT#1{}
\def\TASK#1{}
\numberwithin{theorem}{section}
\numberwithin{equation}{section}
\newdimen\margin   
\def\textno#1&#2\par{%
   \margin=\hsize
   \advance\margin by -4\parindent
          \setbox1=\hbox{\sl#1}%
   \ifdim\wd1 < \margin
      $$\box1\eqno#2$$%
   \else
      \bigbreak
      \hbox to \hsize{\indent$\vcenter{\advance\hsize by -3\parindent
      \it\noindent#1}\hfil#2$}%
      \bigbreak
   \fi}
\def\noproof{{\unskip\nobreak\hfill\penalty50\hskip2em\hbox{}\nobreak\hfill%
       $\square$\parfillskip=0pt\finalhyphendemerits=0\par}\goodbreak}
\def\endproof{\noproof\bigskip}
\title[Bipartitions of highly connected tournaments]{Bipartitions of highly connected tournaments}
\author{ Jaehoon Kim, Daniela K\"uhn, Deryk Osthus }
\thanks{ The research leading to these results was partially supported by the European Research Council under the European Union's Seventh Framework Programme (FP/2007--2013) / ERC Grant Agreements no. and 306349 (J.~Kim and D.~Osthus) as well as 258345 (D.~K\"uhn).}
\begin{document}

\begin{abstract}
We show that if $T$ is a strongly $10^9k^6\log(2k)$-connected tournament, 
there exists a partition $A, B$ of $V(T)$ such that each of $T[A]$, $T[B]$ and $T[A,B]$ is strongly $k$-connected.
This provides tournament analogues of two partition conjectures of Thomassen regarding highly connected graphs.
\end{abstract}

\date{\today}

\maketitle 

\section{Introduction}

\subsection{Partitions of highly connected tournaments}
The study of graph partitions where the resulting subgraphs inherit the properties of the original graph has a long history
with some surprises and numerous open problems, see e.g.~the survey~\cite{Scott}.
For example, a classical result of Hajnal~\cite{Haj} and Thomassen~\cite{Thom} implies that for every $k$ there exists an integer $f(k)$ such that every $f(k)$-connected graph has a vertex partition into sets $A$ and $B$ so that both $A$ and $B$ induce $k$-connected graphs. 
A related conjecture of Thomassen~\cite{CTbip} states that for every $k$ there is an $f(k)$ such that every
$f(k)$-connected graph $G$ has a bipartition $A,B$ so that the spanning bipartite graph $G[A,B]$
is $k$-connected.
It is not hard to show that one cannot achieve both the above properties simultaneously in a highly connected graph.
On the other hand, our main result states that for tournaments, we can find a single partition which achieves both the above properties.
Below we denote by $T[A,B]$ the bipartite subdigraph of $T$ which consists of all edges between $A$ and $B$ but no others.

\begin{theorem}\label{main theorem}
Let $T$ be a tournament and $k \in \mathbb{N}$. If $T$ is strongly $10^9k^6\log(2k)$-connected,
there exists a partition $V_1$, $V_2$ of $V(T)$ such that each of $T[V_1]$, $T[V_2]$ and $T[V_1,V_2]$ is strongly $k$-connected.
\end{theorem}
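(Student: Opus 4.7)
The natural plan is to take a uniformly random bipartition $V_1, V_2$, assigning each vertex of $T$ to $V_1$ or $V_2$ independently with probability $1/2$, and to show that with positive probability each of $T[V_1]$, $T[V_2]$, $T[V_1,V_2]$ is strongly $k$-connected. Since $T$ is strongly $c$-connected with $c := 10^9 k^6 \log(2k)$, every vertex has in- and out-degree at least $c$; Chernoff bounds and a union bound over vertices then give, with positive probability, that every vertex has at least $c/3$ in- and out-neighbors in each of $V_1, V_2$. This is the easy part: it kills the trivial degree obstruction but is far from sufficient for strong $k$-connectivity, which is a statement about \emph{all} cuts simultaneously.

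The real work is to argue that no small directed cut arises in any of the three subdigraphs. For a fixed cut $(X, \overline{X})$ of $T$ of size $c_0 \geq c$, a Chernoff argument shows that the number of its edges surviving in $T[V_1]$, $T[V_2]$, $T[V_1,V_2]$ is concentrated around $c_0/4$, $c_0/4$, $c_0/2$ respectively, so comfortably above $k$. The obstacle is that a naive union bound over all $2^{|V(T)|}$ candidate cuts would force $c$ to grow with $|V(T)|$. The standard remedy is to reduce to a manageable family of ``critical'' cuts of the original tournament --- for instance minimum, minimal, or laminar small cuts --- whose cardinality can be bounded in terms of $k$ alone. I expect this step to rest on a structural lemma about small directed cuts in highly connected tournaments, and the polynomial factor $k^6$ should arise from combining the size of this family with the Chernoff exponents (the $\log(2k)$ factor being the usual slack needed to absorb this union bound).

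The main obstacle I foresee is the bipartite subdigraph $T[V_1, V_2]$. A cut $(S, V(T) \setminus S)$ here is measured only by the \emph{bipartite} edges between $S$ and its complement, which can be much smaller than the cut's size in $T$ --- for example if $S$ lies almost entirely in $V_1$, the bipartite contribution can shrink drastically. One must therefore analyse such cuts as pairs $(S \cap V_1, S \cap V_2)$ and exploit tournament structure, in particular that between any two sizeable sets edges go in \emph{both} directions in large numbers once the connectivity is high, to show that a constant fraction of the bipartite edges across the cut survive the random split. Combining this bipartite-specific argument with the structural reduction of the previous paragraph should close the proof; the numerical constants reflect the three concentration losses compounded with the union bound over the reduced cut family.
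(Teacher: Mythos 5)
Your plan founders at the step you yourself flag as the crux: the reduction of the union bound to a family of ``critical'' cuts of size bounded in terms of $k$ alone. First, note that strong $k$-connectivity is \emph{vertex} connectivity, so the relevant bad event for, say, $T[V_1]$ is not a cut with few surviving edges but a triple $(S,R,S')$ partitioning $V(T)$ with no edges from $S$ to $S'$, $S\cap V_1\neq\emptyset\neq S'\cap V_1$ and $|R\cap V_1|<k$; your Chernoff estimate on the number of edges of a cut surviving the random split addresses edge cuts and does not control these vertex separators. Strong $c$-connectivity of $T$ only tells you $|R|\ge c$ for each such triple, so for a \emph{fixed} separator $R$ the event $|R\cap V_1|<k$ is indeed exponentially unlikely; but the family of separators one must union over consists of essentially all one-way separations of the tournament, whose number is exponential in $|V(T)|$, while the connectivity hypothesis $10^9k^6\log(2k)$ is independent of $|V(T)|$. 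There is no known structural lemma (minimum, minimal, laminar, or otherwise) that reduces this family to one of size bounded in $k$: the potential small separators of $T[V_1]$, $T[V_2]$ and especially of the bipartite digraph $T[V_1,V_2]$ do not arise from a bounded collection of ``critical'' cuts of $T$, and nothing in the proposal supplies such a reduction. As written, the argument therefore only works when $|V(T)|$ is bounded in terms of $k$, which is not the theorem.

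It is also worth knowing that the paper avoids randomness altogether, precisely because no union-bound route is available. Its proof builds an explicit certificate of connectivity: using Lemmas~\ref{out-dominating sets} and~\ref{in-dominating sets} it finds $6k$ small transitive ``fans'' $A_i$ (out-dominating all but a small exceptional set $E_A$) and $B_i$ (in-dominating all but $E_B$), uses the linkedness guaranteed by Theorem~\ref{connectivity implies linkedness} to join them by disjoint $b_i$--$a_i$ paths, and then 2-colours $V(T)$ step by step so that $k$ of these highways are monochromatic in each colour and $4k$ are alternating, while every vertex is kept ``safe'' (able, even after deleting any $k-1$ vertices, to reach a dominated vertex by a monochromatic or alternating path). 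Then any set $F$ of fewer than $k$ vertices misses a highway of the appropriate type, and safety plus domination routes any two vertices through it; no union bound over cuts is ever needed. If you want to salvage a probabilistic approach, you would need an idea playing the role of these robust dominating/linkage structures, not a count of critical cuts.
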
 

We have made no attempt to optimize
the bound on the connectivity in Theorem~\ref{main theorem}. 
(It would be straightforward to obtain minor improvements at the expense of more careful calculations.)
On the other hand,
it would be interesting to obtain the correct order of magnitude for the connectivity bound.

K\"uhn, Osthus and Townsend~\cite{KOT} earlier proved the weaker result that every strongly $10^8k^6\log (4k)$-connected tournament $T$
has a vertex partition $V_1, V_2$ such that $T[V_1]$ and $T[V_2]$ are both strongly $k$-connected (with some control over the sizes of $V_1$
and $V_2$). This proved a conjecture of Thomassen. \cite{KOT} raised the question whether this can be extended to digraphs.

As described later, our proof of Theorem~\ref{main theorem} develops ideas in~\cite{KOT}. These in turn are based on the
concept of robust linkage structures which were introduced in~\cite{KLOP} to prove a conjecture of Thomassen on edge-disjoint
Hamilton cycles in highly connected tournaments. Further (asymptotically optimal) results leading on from these approaches were obtained
by Pokrovskiy~\cite{P, P2}.

\subsection{Subdivisions and linkages}
The famous Lov\'asz path removal conjecture states that for every $k\in \mathbb{N}$ there exists $g(k)\in \mathbb{N}$ such that for every pair $x,y$ of vertices in a 
$g(k)$-connected graph $G$ we can find an induced path $P$ joining $x$ and $y$ in $G$ for which $G\setminus V(P)$ is $k$-connected.
In~\cite{CTt}, Thomassen proved a tournament version of this conjecture. 
Here, we generalize his argument to observe that highly connected tournaments contain a non-separating subdivision of any given digraph $H$
(with prescribed branch vertices). The case when $d=2$ and $m=1$ corresponds to the
result in~\cite{CTt}.

\begin{theorem}\label{thm:subd}
Let $k,d,m \in \mathbb{N}$.
Suppose that $T$ is a strongly $(k+m(d+2))$-connected tournament, that $D$ is a set of $d$ vertices in $T$, that $H$ is a digraph on $d$ vertices and $m$ edges and that $\phi$ is a bijection from $V(H)$ to $D$. Then $T$
contains a  subdivision $H^*$ of $H$ such that 
\begin{itemize}
\item[(i)] for each $h \in V(H)$ the branch vertex of $H^*$ corresponding to $h$ is $\phi(h)$,
\item[(ii)] $T\setminus V(H^*)$ is strongly $k$-connected,
\item[(iii)] for every edge $e$ of $H$, the path $P_e$ of $H^*$ corresponding to $e$ is backwards-transitive.
\end{itemize}
\end{theorem}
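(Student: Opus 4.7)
My plan is to prove Theorem~\ref{thm:subd} by induction on $m$, constructing the paths of $H^*$ one at a time and extending Thomassen's tournament path-removal theorem~\cite{CTt} (which is precisely the case $d=2$, $m=1$). Fix an ordering $e_1,\dots,e_m$ of the edges of $H$ with $e_i = u_iv_i$. Starting from $T$ itself, process the edges one at a time: at step $i$, having committed to backwards-transitive paths $P_1,\dots,P_{i-1}$ (with internal vertices pairwise disjoint and disjoint from $D$), let $T_{i-1} := T \setminus \bigcup_{j<i}\mathrm{int}(P_j)$, and find a backwards-transitive $\phi(u_i)$-to-$\phi(v_i)$ path $P_i$ in $T_{i-1}$ whose internal vertices lie outside $D$. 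The invariant to maintain is that enough strong connectivity of $T_{i-1}$ survives to permit the next step; charging $d+2$ units of connectivity per step absorbs the $k + m(d+2)$ hypothesis and leaves $T \setminus V(H^*)$ strongly $k$-connected at the end, giving (ii), while (i) and (iii) are built into the construction.

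\textbf{Key lemma.} The engine for each step is an extension of~\cite{CTt}: \emph{if $T'$ is strongly $(k'+d+2)$-connected, $D \subseteq V(T')$ has $|D|=d$, and $x,y \in D$, then there is a backwards-transitive $x$-$y$ path $P$ in $T'$ with $V(P)\cap D = \{x,y\}$ such that $T' \setminus \mathrm{int}(P)$ remains strongly $k'$-connected.} To prove it, I would follow the scheme of~\cite{CTt}: let $\mathcal{F}$ be the family of backwards-transitive $x$-$y$ paths in $T'$ whose interior avoids $D$. This family is non-empty — $T' \setminus (D \setminus \{x,y\})$ is still strongly connected, so contains some $x$-$y$ path, and any shortest such path is automatically backwards-transitive since on a shortest path in a tournament every chord must point backward (a forward chord would yield a shorter path). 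Choose $P \in \mathcal{F}$ minimal under a suitable measure — length, with ties broken lexicographically under a fixed enumeration of $V(T')$.

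\textbf{Main obstacle.} The substantive step is the contradiction argument. Suppose $T' \setminus \mathrm{int}(P)$ is not strongly $k'$-connected: then there is a separator $S$ of size $<k'$ in $T' \setminus \mathrm{int}(P)$. Since $|S \cup (D \setminus \{x,y\})| < k' + d - 1 < k' + d + 2$, any pair separated by $S$ in $T' \setminus \mathrm{int}(P)$ is still connected in $T'$, and every such connection must traverse $\mathrm{int}(P)$. The task is then to exploit the backwards-transitive structure of $P$ — which makes $T'[V(P)]$ a transitive tournament in the reverse path order — and splice such a detour with backward chords of $P$ to produce a strictly smaller $P' \in \mathcal{F}$, contradicting the minimality of $P$. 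The transitivity of $T'[V(P)]$ is crucial: when an arc $v_iv_{i+1}$ of $P$ is replaced by an $S$-avoiding detour, the surviving vertices of $V(P)$ keep their linear order and so the new path is still backwards-transitive. Orchestrating this local exchange so that the resulting path strictly beats $P$ under the chosen extremal measure, while honoring the $D \setminus \{x,y\}$ avoidance, is the technical heart of the argument; it closely follows~\cite{CTt}, with the additional bookkeeping needed to accommodate the larger avoided set $D$.
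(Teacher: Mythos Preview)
Your approach is essentially the paper's own: induction on $m$, with the single-edge step being exactly the paper's Lemma~\ref{path-deletion} (applied with $d-2$ in place of its $d$, which gives precisely your $(k'+d+2)$ bound), and that lemma proved by taking a shortest $x$--$y$ path in $T'\setminus(D\setminus\{x,y\})$ and deriving a contradiction from a hypothetical small separator. One small simplification: since a \emph{shortest} such path is automatically backwards-transitive, you can drop the lexicographic tie-breaking and the requirement that the rerouted path $P'$ lie in $\mathcal{F}$ --- it suffices that $P'$ be a shorter $x$--$y$ path in $T'\setminus(D\setminus\{x,y\})$, which is what the paper's splicing produces (it uses five internally disjoint $S_2$--$S_1$ paths through $\mathrm{int}(P)$, avoiding $S\cup(D\setminus\{x,y\})$, to guarantee the replaced segment is long enough).
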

Here a directed path $P=x_1 \dots x_t$ in a tournament $T$ is \emph{backwards-transitive}
if $x_ix_j$ is an edge of $T$ whenever $i \ge j+2$.
The graph version of Theorem~\ref{thm:subd} is still open and would follow from the following
beautiful conjecture of Thomassen~\cite{CTdecsubd}.
\begin{conjecture} \label{partitionconj}
For every $k\in\mathbb{N}$ there exists $f(k)\in\mathbb{N}$ such that if 
$G$ is a $f(k)$-connected graph and $M\subseteq V(G)$ consists of $k$
vertices then there exists a partition $V_1$, $V_2$ of $V(G)$ such that $M\subseteq V_1$, both $G[V_1]$ and $G[V_2]$ are 
$k$-connected, and each vertex in $V_1$ has at least $k$ neighbours in $V_2$.
\end{conjecture}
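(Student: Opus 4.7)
My plan would be speculative, since the final statement is an open conjecture of Thomassen (noted as such in the excerpt). I will describe the most natural route building on the techniques already alluded to in the paper, namely Hajnal--Thomassen-type partition arguments combined with the robust linkage structures exploited in the tournament setting by~\cite{KOT} and~\cite{KLOP}.

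The first step I would take is to reduce the problem to finding \emph{any} partition $V_1,V_2$ with $M\subseteq V_1$ in which both $G[V_1]$ and $G[V_2]$ are $K$-connected, for some $K=K(k)$ much larger than $k$. Such a partition should exist via a variant of the Hajnal--Thomassen theorem in which the extremal choice is constrained to keep $M$ on one side. Concretely, I would mimic the existence proof by choosing a partition that maximises a suitable potential (say the minimum of the connectivities of the two induced sides, tiebroken by the number of crossing edges) subject to $M\subseteq V_1$. Because $|M|=k$ is small compared to the connectivity $f(k)$, the $k$ fixed vertices should be cheap to accommodate: any violation of $K$-connectivity of one side can be resolved by shifting a small neighbourhood across the cut, and $M$ blocks at most $|M|=k$ such moves.

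The second and main step is to upgrade the partition so that every $v\in V_1$ has at least $k$ neighbours in $V_2$. I would try a further potential-function argument: among all candidate partitions produced in the previous step, pick one maximising $e(V_1,V_2)$. If some $v\in V_1$ has fewer than $k$ neighbours in $V_2$, then $v$ has very many neighbours in $V_1$; the goal is to find a neighbour $w$ of $v$ in $V_1\setminus M$ which can be moved to $V_2$ without destroying $k$-connectivity on either side, thereby strictly increasing $e(V_1,V_2)$ and contradicting extremality. This is the point at which robust linkage structures analogous to those in~\cite{KLOP,KOT} would be invoked: prepared inside both $G[V_1]$ and $G[V_2]$, they would guarantee that for most choices of $w$, both $G[V_1\setminus\{w\}]$ and $G[V_2\cup\{w\}]$ retain $k$-connectivity, while also allowing one to insist that $w$ itself has enough neighbours in $V_2$ to preserve $k$-connectivity there after the swap.

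The principal obstacle I foresee is the asymmetry of the degree condition, which applies only to $V_1$. This breaks the symmetry that usually powers such swap arguments: while moving $w$ out of $V_1$ actually \emph{helps} $w$'s former $V_1$-neighbours satisfy the $\geq k$ requirement, it may damage $k$-connectivity of $G[V_2]$ if $w$ has few $V_2$-neighbours to begin with, and there is no obvious dual obligation on the $V_2$ side to exploit. Reconciling these competing requirements uniformly for every low-degree $v\in V_1$, while simultaneously preserving $M\subseteq V_1$ and $k$-connectivity of $G[V_1]$, seems to require a structural handle on $k$-connected graphs with a prescribed boundary that goes beyond what linkage structures alone provide. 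My honest expectation is that the conjecture needs a genuinely new idea, which may be why the authors of the present paper focus on the tournament analogue of Theorem~\ref{thm:subd} (where the dichotomy between the two orientations of each non-edge replaces the missing symmetry) rather than attempting the graph version directly.
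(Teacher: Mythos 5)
You are correct that this statement is an open conjecture of Thomassen, and the paper offers no proof of it: it is recorded only as Conjecture~\ref{partitionconj}, with the remarks that the case $M=\emptyset$ was proved in~\cite{KO}, that the case $|M|=2$ would already imply the Lov\'asz path removal conjecture, and that Theorem~\ref{main theorem} yields a \emph{tournament} version of it. So there is no argument in the paper to compare yours against, and your refusal to claim a proof is the right call. One small correction to your closing remark: the tournament analogue of the conjecture follows from the bipartition result, Theorem~\ref{main theorem}, rather than from Theorem~\ref{thm:subd}; the latter is the non-separating subdivision statement whose graph version would be a \emph{consequence} of the conjecture.

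Read as a proof attempt, your sketch has the gap you yourself flag, and it is exactly the gap that keeps the conjecture open. Step one (a Hajnal--Thomassen extremal partition constrained to keep $M\subseteq V_1$) is plausible but already not routine, since the classical potential arguments move vertex sets of unbounded size across the cut and the constraint $M\subseteq V_1$ can block precisely the moves the argument needs. Step two is where the argument genuinely fails to close: maximising $e(V_1,V_2)$ and swapping a single low-cross-degree vertex's neighbour $w$ into $V_2$ requires certifying that $G[V_1\setminus\{w\}]$ and $G[V_2\cup\{w\}]$ both stay $k$-connected, and no known ``robust linkage'' structure for general graphs provides this; the tournament arguments of~\cite{KLOP,KOT} and of this paper lean heavily on dominating sets of logarithmic size and on linkedness being forced by connectivity (Theorem~\ref{connectivity implies linkedness}), both of which are special to tournaments and have no graph analogue. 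Note also the paper's remark that one cannot make $G[V_1]$, $G[V_2]$ \emph{and} the bipartite graph $G[V_1,V_2]$ all highly connected in general graphs, which shows the one-sided degree condition in the conjecture is essential and that any swap scheme must exploit that asymmetry rather than fight it. In short: nothing to compare, your honesty is appropriate, and the missing ingredient is a new idea, not a refinement of the extremal-swap template.
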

The case $|M|=2$ would already imply the path removal conjecture. The case $M=\emptyset$ was proved in~\cite{KO}. It
implies the existence of non-separating subdivisions (without prescribed branch vertices) in highly connected graphs.
Clearly, Theorem~\ref{main theorem} implies a tournament version of Conjecture~\ref{partitionconj}.

The next theorem guarantees a spanning linkage in a highly connected tournament.
It was proved by Thomassen~\cite{CTt} with a super-exponential bound on the connectivity. 
He asked whether a linear bound suffices.
Here we reduce the bound to a polynomial one.
Pokrovskiy~\cite{P} showed that a linear bound suffices to guarantee a linkage 
if we do not require it to be spanning.

\begin{theorem}\label{cor:spanningsubd}
Let $k\in \mathbb{N}$. Suppose that $T$ is a strongly $(k^2+3k)$-connected tournament and that $x_1,\dots,x_k,y_1,\dots,y_k$ are 
vertices in $T$ such that $x_i\neq y_i$ for all $i\in [k]$ and all the pairs $(x_i,y_i)$ are distinct.
Then $T$ contains pairwise internally disjoint paths $P_i$ from $x_i$ to $y_i$ such that
$\{x_1,\dots, x_k, y_1, \dots, y_k\}\cap V(P_i)=\{x_i, y_i\}$ and
$V(T) = \bigcup_{i=1}^{k} V(P_i)$. 
\end{theorem}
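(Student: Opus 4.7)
The plan is to apply Theorem~\ref{thm:subd} iteratively to build a backwards-transitive linkage from $x_i$ to $y_i$, and then absorb every remaining vertex into one of these paths. For the linkage, at stage $j\in[k]$ I apply Theorem~\ref{thm:subd} in the residual tournament $T_{j-1}$ (obtained by removing the internal vertices of $P_1,\dots,P_{j-1}$) with the digraph $H_j$ consisting of the single edge $x_jy_j$ together with $\{x_{j+1},y_{j+1},\dots,x_k,y_k\}$ as further isolated branch vertices, so that these are preserved for later stages. Setting $d_j=2(k-j+1)$ and $m_j=1$, Theorem~\ref{thm:subd} charges $d_j+2$ connectivity at each stage, and telescoping gives
\[
\sum_{j=1}^{k}(d_j+2)=\sum_{j=1}^{k}\bigl(2(k-j+1)+2\bigr)=k^2+3k,
\]
matching the hypothesis exactly. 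The output is $k$ internally disjoint backwards-transitive paths $P_1,\dots,P_k$ in $T$ from $x_i$ to $y_i$.

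For the absorption, set $R:=V(T)\setminus\bigcup_{i=1}^kV(P_i)$ and write $P_i=u_0^{(i)}u_1^{(i)}\cdots u_{s_i}^{(i)}$. Any vertex $v\in R$ can be inserted between $u_{j-1}^{(i)}$ and $u_j^{(i)}$ whenever $u_{j-1}^{(i)}\to v\to u_j^{(i)}$, producing a new path from $x_i$ to $y_i$ that remains internally disjoint from the others. Encoding the edges between $v$ and $P_i$ as a binary sequence $f_i(\ell)=\mathbf{1}[u_\ell^{(i)}\to v]$, such an insertion index exists precisely when $f_i$ has a $1\to 0$ transition, and fails only when $f_i$ is non-decreasing, i.e.\ $v$ dominates a prefix of $P_i$ and is dominated by the corresponding suffix. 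Processing $R$ greedily, I insert each $v$ into some $P_i$ where the monotone pattern does not occur.

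The main obstacle is the ``bad'' case in which a vertex $v\in R$ has a non-decreasing edge pattern against every $P_i$ simultaneously. The backwards-transitive structure is extremely rigid---all non-consecutive in-path edges point backwards, so no local splice of $v$ around the split point is available---and since the iterative linkage construction has consumed essentially the entire connectivity budget $k^2+3k$, the residual tournament $T\setminus V(H^*)$ provides no further connectivity for a global rerouting. Resolving these globally bad vertices, either by a careful choice of the paths $P_i$ during the linkage step (so that the bad configurations across $P_1,\dots,P_k$ are jointly incompatible for any fixed $v$) or by rebuilding a short segment of some $P_i$ using other not-yet-absorbed vertices of $R$, while keeping the paths internally disjoint throughout, is the crux of the argument.
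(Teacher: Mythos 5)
Your argument has a genuine gap, and it is exactly where you say it is: the absorption step. After extracting a thin linkage $P_1,\dots,P_k$ via Theorem~\ref{thm:subd}, a leftover vertex $v$ can indeed have a monotone (non-decreasing) edge pattern against every $P_i$ simultaneously -- nothing in the construction rules this out -- and you offer no mechanism to handle it. Since the whole connectivity budget $k^2+3k$ is spent on the linkage, there is no reserve for rerouting, and the ``careful choice of the paths'' or ``rebuilding a short segment'' you gesture at is precisely the part that would need a proof. So what you have is a correct derivation of an internally disjoint (non-spanning) linkage, which is already known (and with a better, linear bound, by Pokrovskiy), plus an unproved claim that it can be made spanning. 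There is also a secondary issue: you feed the not-yet-linked terminals $x_{j+1},y_{j+1},\dots,x_k,y_k$ into Theorem~\ref{thm:subd} as isolated branch vertices, but that theorem requires the prescribed set $D$ to consist of distinct vertices, whereas the hypothesis of Theorem~\ref{cor:spanningsubd} only requires the \emph{pairs} $(x_i,y_i)$ to be distinct, so terminals may repeat across pairs and your bookkeeping does not account for that.

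The paper avoids the absorption problem entirely by running the induction in the opposite direction. It applies Lemma~\ref{path-deletion} once, with $Z=\{x_1,\dots,x_{k-1},y_1,\dots,y_{k-1}\}$ and $X=\{x_k,y_k\}\cap Z$, to find an $x_ky_k$-path $P$ (a shortest path, hence non-separating) whose removal leaves a strongly $((k-1)^2+3(k-1))$-connected tournament on $W=V(T)\setminus(V(P)\setminus X)$; by induction the remaining $k-1$ paths are \emph{spanning} in $T[W]$, and the base case $k=1$ is Thomassen's theorem that strongly $4$-connected tournaments are Hamiltonian-connected, which is what ultimately soaks up all the vertices. In other words, the spanning property is not obtained by locally inserting leftover vertices into rigid backwards-transitive paths, but is pushed down the induction until a single Hamiltonian path between prescribed endpoints does the covering. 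That base case (or some comparable global covering tool) is the ingredient your outline is missing; without it, the greedy insertion scheme cannot be completed as stated.
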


Both Theorem~\ref{thm:subd} and~\ref{cor:spanningsubd} can be deduced from Theorem~\ref{main theorem}
(but with weaker bounds). 
Instead, in Section~\ref{linkage} we adapt the argument from~\cite{CTt} to obtain a short direct proof of both these results.

\section{Notation and tools}\label{notation}

Given $k\in\mathbb{N}$, we let $[k]:=\{1,\dots,k\}$, $[k,k+\ell]:=\{k,\dots,k+\ell\}$ and $\log k:=\log_2 k$.
We write $V(G)$ and $E(G)$ for the set of vertices and the set of edges in a digraph $G$. We let $|G|:=|V(G)|$. If $u,v\in V(G)$ we write $uv$ for the
directed edge from $u$ to $v$.  We write  $d_G^-(v)$ and $d^+_G(v)$ for the in-degree and the out-degree of a vertex $v$ in $G$.
We write $\delta^-(G)$ and  $\delta^+(G)$ for the minimum in-degree and the minimum out-degree of $G$ and let $\delta^0(G):=\min\{\delta^-(G),\delta^+(G)\}$.
A set $A\subseteq V(G)$ \textit{in-dominates} a set $B\subseteq V(G)$ if for every vertex $b\in B$ there exists a vertex $a\in A$ such that $ba\in E(G)$. Similarly,
we say that $A$ \textit{out-dominates} $B$ if for every vertex $b\in B$ there exists a vertex $a\in A$ such that $ab\in E(G)$. 
We say that a tournament $T$ is \textit{transitive} if we may enumerate its vertices $v_1,\dots, v_m$ such that $v_iv_j\in E(T)$ if and only if $i<j$.
In this case we call $v_1$ the \textit{source} of $T$ and $v_m$ the \textit{sink} of $T$.
When referring to subpaths of tournaments, we always mean that
these paths are directed (i.e.~consistently oriented). The \textit{length} of a path is the number of its edges. We say that a path $P$
is \textit{odd} if its length is odd, and \textit{even} if its length is even. We say that two paths are disjoint if they
are vertex-disjoint. A tournament $T$ is \textit{strongly $k$-connected} if $|T|>k$ and for every set $F\subseteq V(T)$ with $|F|<k$ and
every ordered pair $x,y$ of vertices in $V(T)\setminus F$
there exists a path from $x$ to $y$ in $T-F$.
A tournament $T$ is called \textit{k-linked} if $|T|\geq 2k$ and whenever $x_1,\dots, x_k, y_1, \dots, y_k$ are $2k$ distinct vertices of~$T$
there exist disjoint paths $P_1,\dots, P_k$ such that $P_i$ is a directed path from $x_i$ to $y_i$ for each $i\in [k]$.

We now collect the tools which we need in our proof of Theorem~\ref{main theorem}. 
The following proposition
is a straightforward consequence of the definition of linkedness.

\begin{proposition}\label{linkedness alt def}
Let $k\in \mathbb{N}$. Then a tournament $T$ is $k$-linked if and only if $|T|\geq 2k$ and whenever $(x_1,y_1), \dots, (x_k,y_k)$ are ordered pairs
of (not necessarily distinct) vertices of $T$, there exist distinct internally disjoint paths $P_1, \dots, P_k$ such that for all $i\in [k]$
we have that $P_i$ is a directed path from $x_i$ to $y_i$ and that $\{x_1,\dots, x_k, y_1, \dots, y_k\}\cap V(P_i)=\{x_i, y_i\}$.
\end{proposition}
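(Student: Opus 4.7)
\emph{Plan.} The proposition is an equivalence of two formulations of $k$-linkedness, and the ``if'' direction is immediate. Given any $2k$ distinct vertices $x_1,\dots,x_k,y_1,\dots,y_k$ of $T$, apply the alternative condition to the ordered pairs $(x_i,y_i)$ to obtain internally disjoint paths $P_1,\dots,P_k$ with $V(P_i)\cap\{x_1,\dots,y_k\}=\{x_i,y_i\}$. For $i\neq j$, any vertex of $V(P_i)\cap V(P_j)$ would have to be a shared endpoint of $P_i$ and $P_j$ (by internal disjointness), and by the endpoint-avoidance clause it must lie in $\{x_i,y_i\}\cap\{x_j,y_j\}$, which is empty by distinctness of the chosen endpoints. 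Hence the paths are vertex-disjoint, witnessing that $T$ is $k$-linked.

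For the substantive ``only if'' direction, suppose $T$ is $k$-linked and that ordered pairs $(x_1,y_1),\dots,(x_k,y_k)$ with possibly repeated vertices are given. Write $S:=\{x_1,\dots,x_k,y_1,\dots,y_k\}$. My plan is to reduce to the case of $2k$ distinct endpoints by substituting ``proxy'' vertices: for every vertex $u$ appearing as a tail in $r\ge 2$ of the pairs, choose $r-1$ distinct out-neighbours of $u$ lying outside $S$ to serve as surrogate tails, and symmetrically choose in-neighbour proxies for every repeated head. Once each repeated endpoint has been replaced by its proxies, the modified pairs have $2k$ distinct endpoints, so the $k$-linked hypothesis yields vertex-disjoint paths between them. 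Prepending or appending the single-edge proxy hops to these paths then produces the required $P_i$ from $x_i$ to $y_i$; internal disjointness together with $V(P_i)\cap S=\{x_i,y_i\}$ is inherited from the vertex-disjointness of the inner linkage combined with the fact that every proxy was chosen to lie outside $S$.

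The main technical obstacle is guaranteeing that the proxies can always be selected. A one-shot degree count using the bound $\delta^0(T)\ge k$ (a consequence of $k$-connectedness) does not suffice, since $|S|$ may be as large as $2k$ and in principle all out-neighbours of a repeated tail could already lie in $S$. I would circumvent this by handling repetitions iteratively rather than simultaneously: at each stage, apply $k$-linkedness to a carefully chosen set of $2k$ distinct vertices incorporating the current repeated endpoint together with an auxiliary target, and extract the next proxy from the resulting linkage itself. This way one uses the full strength of $k$-linkedness (not merely the implied minimum semi-degree) to sidestep the potential scarcity of free neighbours around a repeated endpoint, and delivers the paths required by the alternative formulation.
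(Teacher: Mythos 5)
Your ``if'' direction is fine. The gap is in the ``only if'' direction, at exactly the point you yourself flag as the main obstacle and then leave unresolved: you never actually prove that the proxies can be chosen. Saying you would ``apply $k$-linkedness to a carefully chosen set of $2k$ distinct vertices incorporating the current repeated endpoint together with an auxiliary target, and extract the next proxy from the resulting linkage itself'' is a plan, not an argument --- you do not specify the $2k$ vertices, why the extracted vertex lies outside $S$ and differs from previously chosen proxies, or why the iteration produces a usable system; this is precisely where the content of the proposition sits. There is also a defect in the reduction itself: a vertex occurring once as a tail and once as a head is not ``repeated'' under your criterion, so it would be kept in both roles and the modified pairs would not consist of $2k$ distinct vertices, so $k$-linkedness cannot be applied. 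You must keep each vertex of $S$ in exactly one of its roles and proxy all its other occurrences; keeping every vertex of $S$ exactly once as a terminal of the auxiliary problem is also what makes your final ``inheritance'' sentence true, since otherwise nothing prevents the inner linkage paths from passing through a vertex of $S$, in which case $V(P_i)\cap S=\{x_i,y_i\}$ fails even though the proxies avoid $S$.

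The missing idea that makes the proxy scheme work is a sharper degree bound than $\delta^0(T)\ge k$. The definition of $k$-linkedness in fact forces $\delta^0(T)\ge 2k-1$: if $d^+_T(v)\le 2k-2$, choose $y\notin N^+(v)\cup\{v\}$, take $(v,y)$ as one pair and place all vertices of $N^+(v)$ among the remaining $2k-2$ terminals (padding with arbitrary further vertices); any path from $v$ to $y$ must start with an out-neighbour of $v$, but all of these lie on other paths of the disjoint linkage, a contradiction. With $\delta^0(T)\ge 2k-1$, every $v\in S$ has at least $(2k-1)-(|S|-1)=2k-|S|$ out-neighbours outside $S$ (and likewise in-neighbours), while the total number of proxies needed --- keeping each vertex of $S$ in exactly one role --- is exactly $2k-|S|$, so a greedy choice of distinct proxies outside $S$ always succeeds and your splicing argument then goes through. (The paper records no proof, calling the proposition a straightforward consequence of the definition; this degree bound, or some substitute for it, is what your write-up lacks.)
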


We will also use the following bound  from~\cite{P} on the connectivity which forces a tournament to be highly linked.

\begin{theorem}\label{connectivity implies linkedness}
For each $k\in \mathbb{N}$ every strongly $452 k$-connected tournament is $k$-linked.
\end{theorem}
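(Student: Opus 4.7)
The plan is a greedy iterative construction of the linkage $P_1,\dots,P_k$, combined with a short-path lemma that keeps the cumulative number of used internal vertices small compared to the connectivity $452k$. Throughout, I would maintain the invariant that after constructing $P_1,\dots,P_{i-1}$ the set $W_i$ consisting of their internal vertices together with the remaining prescribed endpoints $\{x_j,y_j : j\neq i\}$ has size $O(k)$, so that $T - W_i$ remains strongly $(452k - |W_i|)$-connected and the next path can be built inside it.

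The first ingredient is a short-path lemma: in any strongly $s$-connected tournament $T'$, for any two vertices $u,v$ and any forbidden set $F$ with $|F| \le s/2$, there is a $u$-$v$ path in $T' - F$ of length $O(\log |T'|)$. I would prove this by iterated neighbourhood expansion: starting from $S_0 = \{u\}$ and repeatedly replacing $S$ by $S \cup N^+(S)$ inside $T' - F$, the set roughly doubles in size at each step, since strong $s$-connectivity of $T'$ forces the out-neighbourhood of $S$ to contain at least $s - |F| \ge s/2$ vertices outside $F \cup S$ whenever $V(T') \setminus (F \cup S)$ is still large. Iterating until the current set out-dominates $v$ yields a path of the required length.

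A logarithmic path length is however not good enough: over $k$ rounds we would accumulate $\Theta(k \log |T|)$ internal vertices, which can exceed $452k$ for large $T$. To bring the length of each $P_i$ down to a value depending only on $k$, I would reserve in advance a \emph{router} substructure inside $T$, concretely a transitive subtournament $H$ of size $\Theta(k)$, whose existence is guaranteed by the classical $\lceil \log_2 |T| \rceil$ lower bound on transitive subtournaments (and $|T|$ is huge relative to $k$ since $T$ is $452k$-connected). Because $H$ is transitive, each vertex $h \in H$ is a selectable waypoint: given $(x_i,y_i)$ and a small forbidden set, one routes into a prefix of $H$ from $x_i$ in a few edges via the short-path lemma, traverses an edge of $H$ to a suffix vertex, and exits to $y_i$ in a few further edges. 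This produces $P_i$ of length $O(1)$. Summing the connectivity losses --- reserving $H$, the successive removals of $W_i$, and the slack needed to re-apply the short-path lemma in the restricted subtournament --- forces the constant to be linear in $k$, with $452$ being one sufficient value.

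The main obstacle is the second step: forcing the path length to be bounded independently of $|T|$. The direct neighbourhood-expansion argument only gives $O(\log |T|)$, which would yield linkedness at a connectivity scaling with $\log |T|$ rather than a clean linear bound in $k$. Overcoming this requires the routing idea through a carefully chosen substructure (a transitive router, or a median-order / robust-expander variant); this is the combinatorial heart of the result, and the bookkeeping that turns the short-path lemma plus a transitive router into a fixed linear constant is where the value $452$ is finally pinned down.
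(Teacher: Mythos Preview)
The paper does not prove this theorem; it is quoted from Pokrovskiy~\cite{P} and used as a black box, so there is no in-paper proof to compare against. I therefore assess your sketch on its own merits.

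There is a genuine gap in the router step. You claim that the classical lower bound on transitive subtournaments gives a transitive $H$ of size $\Theta(k)$ because ``$|T|$ is huge relative to $k$''. But strong $452k$-connectivity only forces $\delta^0(T)\ge 452k$, hence $|T|\ge 904k+1$; the guaranteed transitive subtournament then has order $\lceil\log_2|T|\rceil=O(\log k)$, not $\Theta(k)$. A router that small cannot serve $k$ pairwise internally disjoint paths, so the whole bookkeeping collapses. Secondly, even granting a large transitive $H$, you assert that one can route from $x_i$ into a prefix of $H$ ``in a few edges via the short-path lemma''; but your short-path lemma produces paths of length $O(\log|T|)$, which is precisely the dependence you are trying to eliminate. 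You never explain why reaching $H$ should be any cheaper than reaching an arbitrary target, so the argument is circular at this point.

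For comparison, Pokrovskiy's proof (building on the robust linkage structures of~\cite{KLOP}) does not route through a transitive subtournament. The key mechanism is that any tournament contains small in- and out-dominating sets --- see Lemmas~\ref{out-dominating sets} and~\ref{in-dominating sets} in the present paper. One reserves $\Theta(k)$ pairs of such dominating sets $A_j,B_j$, links each $B_j$ to its $A_j$ by a path, and then every vertex enters some $B_j$ and exits some $A_j$ in a \emph{single} edge. This is what yields genuine $O(1)$ routing independent of $|T|$ and makes a linear connectivity bound possible. Your greedy outer loop is fine in spirit, but the inner routing must be replaced by a dominating-set based linkage structure rather than a transitive router.
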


The following two lemmas  guarantee that every tournament contains almost out-dominating and almost in-dominating sets which are not too large.
(A similar observation was also used in~\cite{KLOP}, see Lemmas 8.3 and 8.4.)

\begin{lemma}\label{out-dominating sets}
Let $T$ be a tournament, 
let $v\in V(T)$ and $c\in \mathbb{N}$ with $c \ge 2$. 
Suppose that $d^-_T(v) \ge 2^{c-1}$.
Then there exist disjoint sets $A, E\subseteq V(T)$ and a vertex $a \in A$ 
such that the following properties hold:
\begin{enumerate}[{\rm (i)}]
\item $2\leq |A|\leq c$ and $T[A]$ is a transitive tournament with source $a$ and sink $v$,
\item $A \setminus \{a \}$ out-dominates $V(T)\setminus (A\cup E)$,
\item $|E|\leq (1/2)^{c-2}d^-_T(v)$.
\end{enumerate}
\end{lemma}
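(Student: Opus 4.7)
The plan is to build $A$ greedily around $v$. Starting from the ``candidate pool'' $N_0:=N_T^-(v)$, at step $i\ge 1$ I select a vertex $u_i\in N_{i-1}$ and then update
\[
  N_i:=N_{i-1}\cap N_T^-(u_i).
\]
By construction $u_i$ dominates $v$ and each previously chosen $u_j$ ($j<i$), so $\{v,u_1,\dots,u_{t^*}\}$ always spans a transitive subtournament with $v$ as sink and $u_{t^*}$ as source. Choosing $u_i$ of large out-degree in $T[N_{i-1}]$ shrinks $N_i$ quickly, and at the end the residual set $N_{t^*-1}\setminus\{u_{t^*}\}$ will serve as $E$: it consists precisely of those vertices in $N_T^-(v)\setminus A$ not out-dominated by any $u_j$ with $j<t^*$.

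The subtle point is to avoid collapsing $N_i$ to the empty set before $i$ reaches $c-1$. If some vertex of maximum out-degree in $T[N_{i-1}]$ dominates all other vertices of $N_{i-1}$, picking it as $u_i$ would give $N_i=\varnothing$, and then no source vertex for $A$ would remain. The fix is a ``runner-up'' rule: in that bad case, pick $u_i$ of second-largest out-degree instead. A short averaging argument (in the subtournament obtained by deleting the unique dominating vertex of $T[N_{i-1}]$) shows this runner-up has out-degree at least $\lceil(|N_{i-1}|-2)/2\rceil$ but at most $|N_{i-1}|-2$, yielding the invariants
\[
  |N_i|\le \lfloor|N_{i-1}|/2\rfloor\qquad\text{and}\qquad |N_i|\ge 1\text{ whenever }|N_{i-1}|\ge 2.
\]
I would then stop at the first step $t^*$ with $|N_{t^*}|=0$ or $t^*=c-1$, and set $A:=\{v,u_1,\dots,u_{t^*}\}$, $a:=u_{t^*}$, $E:=N_{t^*-1}\setminus\{u_{t^*}\}$.

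Verification of the three conclusions is routine. Part~(i) is transparent, with $2\le|A|=t^*+1\le c$ (using $d_T^-(v)\ge 2$ to ensure at least one selection). For~(ii), any $x\in V(T)\setminus(A\cup E)$ either lies in $N_T^+(v)$ (and is dominated by $v\in A\setminus\{a\}$) or lies in $N_0\setminus N_{t^*-1}$; in the latter case, taking $j\le t^*-1$ minimal with $x\notin N_j$, the definition of $N_j$ forces either $x=u_j$ (impossible since $x\notin A$) or $u_j\to x$, and $u_j\in A\setminus\{a\}$. For~(iii), iterating $|N_i|\le|N_{i-1}|/2$ gives $|N_{t^*-1}|\le d_T^-(v)/2^{t^*-1}$, so $|E|<(1/2)^{c-2}d_T^-(v)$ when $t^*=c-1$, and otherwise the second invariant forces $|N_{t^*-1}|=1$ so $E=\varnothing$. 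The main obstacle is exactly the need for the runner-up tweak: the naive maximum-out-degree choice can prematurely annihilate $N_i$, after which the target size $c$ cannot be reached without violating~(iii), so the whole calibration of the halving rate and stopping time hinges on this one-line modification.
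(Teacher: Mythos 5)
Your proposal is correct and follows essentially the same greedy-halving argument as the paper: repeatedly pick a vertex of large out-degree (small in-degree) inside the common in-neighbourhood of $v$ and the vertices chosen so far, so the candidate pool at least halves each step, and finish by taking the last chosen vertex as the source $a$ and the leftover pool as $E$. The only difference is how the degenerate case of a dominating vertex in the pool is handled -- the paper parks it in a reserve set $W_i$ to serve later as $a$, while you pick the runner-up and keep the dominating vertex in the pool -- and both fixes work equally well.
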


\begin{proof}
Let $v_{1}:=v$. 
Roughly speaking, we will find $A$ by choosing vertices $v_1,\dots,v_i$ such that
the size of their common in-neighbourhood (i.e.\ the intersection of their individual in-neighbourhoods) is minimised at each
step. More precisely, suppose inductively 
that for some $1 \le i<c$ we have already found a set
$A_{i}=\{v_{1},\dots,v_{i}\}$ and a set $W_i$ such that the following holds:
\begin{itemize}
\item[(a)] $T[A_i]$ is a transitive tournament with sink $v_{1}$;
\item[(b)] $W_i=\emptyset$ or $W_i=\{a\}$ for some vertex $a$. Moreover, if $W_i=\{a\}$ then $E_i \cup A_i \subseteq N^+(a)$,
where $E_i:= \bigcap_{j=1}^i N^-(v_j) \setminus W_i$.
\item[(c)]  
$|E_{i}|\le\frac{1}{2^{i-1}}d^{-}(v).$ Moreover, $|E_i|>0$ if $W_i=\emptyset$. 
\end{itemize}
Note that (a)--(c) hold for $i=1$ if we let $A_{1}:=\{v_{1}\}$ and $W_1=\emptyset$. 

We first consider the case that $|E_{i}|\le\frac{1}{2^{c-2}}d^{-}(v)$.
If $W_i=\emptyset$, choose any vertex $a \in E_{i}$.
Else let $a$ be the vertex in $W_i$. 
In both cases let $A:= A_i \cup \{a\} $ and $E:=E_i\setminus \{a\} $.
Then $A$ and $E$ satisfy (i)--(iii).

So suppose next that $|E_{i}|>\frac{1}{2^{c-2}}d^{-}(v)$. 
(Note that in particular, this means that $|E_i| \ge 2$.)
By averaging, it follows that $E_{i}$ must contain a vertex $x$ of in-degree at most $|E_{i}|/2$ in
$T[E_{i}]$. If the in-degree of $x$ in $T[E_{i}]$ is nonzero or $W_i \neq \emptyset$, let $v_{i+1}:=x$.
Else let $v_{i+1}$ be a vertex of in-degree at most $|E_{i} \setminus \{x\}|/2$ in
$T[E_{i}\setminus \{ x \} ]$, and let $W_{i+1}:=\{x \}$ (note that we can find such a $v_{i+1}$ as $|E_i| \ge 2$).
Now let $A_{i+1}:=\{v_{1},\dots,v_{i+1}\}$ and
let $E_{i+1}:=(E_i \cap N^-(v_{i+1})) \setminus W_{i+1}  $.
Then $T[A_{i+1}]$ is a transitive tournament with sink $v_1$ and 
\[
|E_{i+1}|\le\frac{1}{2}|E_{i}|\le\frac{1}{2^{i}}d^{-}(v).
\]
So we have shown that (a)--(c) hold with $i+1$ playing the role of $i$.
By repeating this construction,  will eventually find $A$ and $E$ satisfying 
(i)--(iii). (Indeed, note that we must be in the first case for some $i <c$, in particular this implies that $|A| \le c$.)
\end{proof}

The next lemma follows immediately from Lemma \ref{out-dominating sets} by reversing the orientations of all edges.

\begin{lemma}\label{in-dominating sets}
Let $T$ be a tournament, let $v\in V(T)$ and $c\in \mathbb{N}$ with $c \ge 2$. 
Suppose that $d^+_T(v) \ge 2^{c-1}$.
Then there exist disjoint sets $B, E\subseteq V(T)$ and a vertex $b \in B$ such that the following properties hold:
\begin{enumerate}[{\rm (i)}]
\item $2\leq |B|\leq c$ and $T[B]$ is a transitive tournament with sink $b$ and source $v$,
\item $B \setminus \{b\}$ in-dominates $V(T)\setminus (B\cup E)$,
\item $|E|\leq (1/2)^{c-2}d^+_T(v)$.
\end{enumerate}
\end{lemma}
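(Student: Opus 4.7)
The plan is to deduce Lemma~\ref{in-dominating sets} from Lemma~\ref{out-dominating sets} via a standard edge-reversal trick, since the statement is exactly the ``dual'' of Lemma~\ref{out-dominating sets}. Let $T'$ denote the tournament obtained from $T$ by reversing the orientation of every edge. Then for every vertex $u\in V(T)=V(T')$ one has $d^-_{T'}(u)=d^+_T(u)$ and $d^+_{T'}(u)=d^-_T(u)$, and a set $X$ out-dominates a set $Y$ in $T'$ precisely when $X$ in-dominates $Y$ in $T$. Moreover, a transitive subtournament of $T'$ with source $s$ and sink $t$ corresponds to a transitive subtournament of $T$ with source $t$ and sink $s$.

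With this dictionary in place, I would simply apply Lemma~\ref{out-dominating sets} to the tournament $T'$ with the same vertex $v$ and the same constant $c$; the hypothesis $d^+_T(v)\ge 2^{c-1}$ becomes $d^-_{T'}(v)\ge 2^{c-1}$, which is exactly what Lemma~\ref{out-dominating sets} requires. This yields disjoint sets $A,E\subseteq V(T')=V(T)$ and a vertex $a\in A$ such that $2\le |A|\le c$, $T'[A]$ is a transitive tournament with source $a$ and sink $v$, the set $A\setminus\{a\}$ out-dominates $V(T')\setminus(A\cup E)$, and $|E|\le (1/2)^{c-2}\,d^-_{T'}(v)$.

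Translating each of these statements back through the reversal, I then set $B:=A$ and $b:=a$. Property~(i) of Lemma~\ref{in-dominating sets} follows because reversing edges swaps source and sink in any transitive tournament, so $T[B]$ is transitive with sink $b$ and source $v$; property~(ii) follows since out-domination in $T'$ is in-domination in $T$; and property~(iii) follows from $d^-_{T'}(v)=d^+_T(v)$. No step should present a real obstacle — the only thing to be careful about is correctly swapping ``source/sink'' and ``in/out'' throughout, so I would just spell out the correspondence once at the start and then invoke Lemma~\ref{out-dominating sets} as a black box.
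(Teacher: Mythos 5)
Your proposal is correct and is exactly the paper's own argument: the paper derives Lemma~\ref{in-dominating sets} from Lemma~\ref{out-dominating sets} by reversing the orientations of all edges, just as you do. The dictionary you set up (swapping in/out-degrees, in/out-domination, and source/sink) is the right one and nothing further is needed.
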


We will also need the following observation, which guarantees a small set $Z$ of vertices in a tournament such that every vertex
outside $Z$ has many out- and in-neighbours in $Z$.

\begin{proposition}\label{index selection}
Let $k,n \in \mathbb{N}$ and let $T$ be a tournament on $n\ge 4$ vertices. Then there is a set $Z\subseteq V(T)$ of size
$|Z|\le 3k\log n$ such that each vertex in $V(T)\setminus Z$
has at least $k$ out-neighbours and at least $k$ in-neighbours in $Z$. 
\end{proposition}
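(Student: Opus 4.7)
The plan is to construct $Z$ as a union $Z^{+}\cup Z^{-}$, where $Z^{+}$ makes sure every outside vertex has at least $k$ out-neighbours in $Z^{+}$ and $Z^{-}$ does the analogous job for in-neighbours. I will arrange $|Z^{+}|,|Z^{-}|\le k(\log n+1)$, so that $|Z|\le 2k(\log n+1)\le 3k\log n$, using that $n\ge 4$ gives $\log n\ge 2$.

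The workhorse is the classical ``tournament dominating set'' lemma: every tournament $T'$ on $n'\ge 1$ vertices has a set $D\subseteq V(T')$ with $|D|\le \log n'+1$ such that every $v\in V(T')\setminus D$ has an out-neighbour in $D$. To prove it, iteratively choose a vertex $x_i$ of maximum in-degree in the current sub-tournament, place $x_i$ in $D$, and delete $\{x_i\}\cup N^{-}(x_i)$; since $x_i$ has in-degree at least half of the current size minus one, the surviving vertex set at least halves at each step, so the process terminates within $\log n'+1$ rounds. Any deleted vertex $v\neq x_i$ lies in $N^{-}(x_i)$, i.e.\ $vx_i$ is an edge, so $x_i\in D$ is the desired out-neighbour of $v$.

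Then I construct $Z^{+}$ in $k$ rounds: starting from $Z^{+}_{0}:=\emptyset$, at round $i$ apply the domination lemma to $T-Z^{+}_{i-1}$ to obtain a set $D_{i}$ with $|D_{i}|\le \log n+1$, and put $Z^{+}_{i}:=Z^{+}_{i-1}\cup D_{i}$. Since the $D_i$ are pairwise disjoint by construction, after $k$ rounds every $v\notin Z^{+}:=Z^{+}_{k}$ has gained a distinct out-neighbour in each $D_{i}$, hence at least $k$ out-neighbours in $Z^{+}$, while $|Z^{+}|\le k(\log n+1)$. Running the same procedure on the reverse of $T$ (equivalently, iterating on vertices of maximum out-degree and collecting in-neighbours of the deleted vertices) yields $Z^{-}$ with the matching in-neighbour property, and $Z:=Z^{+}\cup Z^{-}$ satisfies the proposition.

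There is no genuinely hard step here. The only points to watch are stating the base lemma in the right direction so that its conclusion delivers out-neighbours rather than in-neighbours of the deleted vertices (this amounts to a choice of which extremal vertex to iterate on), and the trivial size bookkeeping $2k(\log n+1)\le 3k\log n$, which is where the mild hypothesis $n\ge 4$ is used.
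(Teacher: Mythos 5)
Your proof is correct and follows essentially the same route as the paper: both iterate the classical greedy logarithmic dominating-set argument $k$ times in each direction (your maximum in-degree/deletion formulation is just the dual bookkeeping of the paper's minimum out-degree within the common out-neighbourhood) and take the union, with the same use of $n\ge 4$ to absorb $2k(\log n+1)$ into $3k\log n$. No gaps.
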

\begin{proof}
We may assume that $n\ge 3k\log n$. We will use the fact that every tournament on $n$ vertices contains an in-dominating set
of size at most $c:=\lceil \log n\rceil \le (3\log n)/2$.%
   \COMMENT{had $\lceil \log n\rceil+1$ before, but $\lceil \log n\rceil$ seems to be fine}
(This can be proved by choosing the vertices $x_1,x_2,\dots$ in the in-dominating set one by one, similarly as in the proof of Lemma~\ref{out-dominating sets}:
at the $i$th step we let $x_i$ be a vertex with the smallest out-degree in $T[\bigcap_{j<i} N^+(x_j)]$.) 
Choose an in-dominating set $V_1$ in $T$ of size at most $c$. Now consider the tournament $T- V_1$.
Choose an in-dominating set $V_2$ in $T- V_1$ with size at
most $c$. Continue in this way to obtain disjoint sets $V_1,\dots,V_k$. Proceed similarly to obtain
disjoint sets $U_1,\dots,U_k$, each of size at most~$c$, such that each $U_i$ is an out-dominating set in $T- (U_1\cup \dots \cup U_{i-1})$.
We can take $Z:= V_1\cup \dots \cup V_k\cup U_1 \dots \cup U_k$.
\end{proof}

\section{Proof of Theorem~\ref{main theorem}}\label{main section}

Let $X:=\{ x_1,x_2,\dots,x_{6k}\}\subseteq V(T)$ consist of $6k$ vertices whose in-degree in $T$ is as small as possible, and let
$Y:=\{ y_1,y_2,\dots, y_{6k}\}$ be a set of $6k$ vertices in $V(T)\setminus X$ whose out-degree in $T$ is as small as possible.
Define
$$
\hat{\delta}^-(T):= \min\limits_{v\in V(T)\setminus  X} d_T^-(v) \hspace{1cm} \textrm{and}\hspace{1cm}
\hat{\delta}^+(T):= \min\limits_{v\in V(T)\setminus Y} d_T^+(v).
$$
Let $c:= \left\lceil \log \left( 120k^2 \right) \right\rceil+2\le 9k$. Apply Lemmas~\ref{out-dominating sets}
and~\ref{in-dominating sets} with parameter $c$ repeatedly (removing the dominating sets each time) to obtain disjoint sets of vertices
$A_1,A_2,\dots, A_{6k},$ $B_1,B_2,\dots, B_{6k}$ and sets of vertices $E_{A_1},\dots, E_{B_{6k}}$
satisfying the following properties for all $i\in [6k]$, where we write
$D:=\bigcup_{i=1}^{6k}(A_i\cup B_i)$, 
\begin{enumerate}
\item[(D1)] $2\leq |A_i|\leq c$ and $T[A_i]$ is a transitive tournament with sink $x_i$ and source $a_i$,
\item[(D2)] $2\leq |B_i|\leq c$ and $T[B_i]$ is a transitive tournament with source $y_i$ and sink $b_i$,
\item[(D3)] $A_i \setminus \{a_i\} $ out-dominates $V(T)\setminus (D\cup E_{A_i})$ in~$T$,
\item[(D4)] $B_i \setminus \{b_i\}$ in-dominates $V(T)\setminus (D\cup E_{B_i})$ in~$T$,
\item[(D5)] $|E_{A_i}|\leq (1/2)^{c-2}\hat{\delta}^-(T)$,
\item[(D6)] $|E_{B_i}|\leq (1/2)^{c-2}\hat{\delta}^+(T)$.
\end{enumerate}
Let $$E_A:=\bigcup_{i \in [6k]} E_{A_i}, \ \ \ \ E_B:=\bigcup_{i \in [6k]} E_{B_i} \ \ \ \text{ and } \ \ \ E:= E_A\cup E_B.$$
Note that
\begin{equation}\label{E_A size}
 |E_A|\leq 6k \left( \frac{1}{2}\right)^{c-2}\hat{\delta}^-(T)\leq \frac{\hat{\delta}^-(T)}{20k} \ \ \ \ \text{and} \ \ \ \ 
 |E_B|\leq \frac{\hat{\delta}^+(T)}{20k}
\end{equation}
by our choice of $c$. Moreover, we may assume that $|E_A|\leq |E_B|$.
(The case $|E_A|>|E_B|$ follows by a symmetric argument.) In particular, this implies that
\begin{equation}\label{E size}
|E|\leq |E_A|+|E_B|\leq 2|E_B|\leq \frac{\hat{\delta}^+(T)}{10k}.
\end{equation}

We will iteratively colour the vertices of $T$ with colours $\alpha$ and $\beta$, and at each step $V_\alpha$ will
consist of all vertices of colour $\alpha$ and $V_\beta$ is defined similarly. At the end of our argument, every vertex of $T$ will
be coloured either with $\alpha$ or with $\beta$, i.e.~$V_\alpha,V_\beta$ will form a partition of $V(T)$.
Our aim is to colour the vertices in such a way that we can take $V_1:=V_\alpha$ and $V_2:=V_\beta$. 

We say a path $P$ is \textit{alternating} if the colour of the vertices on $P$ alternates as we move along~$P$.
$P$ is \textit{monochromatic} if all vertices of $P$ have the same colour.

At each step and for each $\gamma \in\{\alpha,\beta\}$, we call a vertex $v\in V_\gamma$ \textit{forwards-safe}
if for any set $F\not\ni v$ of at most $k-1$ vertices, there is a directed monochromatic path (possibly of length $0$) in $T[V_\gamma\setminus F]$ from $v$
to $V(T)\setminus (D\cup E_B\cup F)$. 
Similarly, we say that $v\in V_\gamma$ is \textit{backwards-safe}
if for any set $F\not\ni v$ of at most $k-1$ vertices, there is a directed monochromatic path (possibly of length $0$) in
$T[V_\gamma\setminus F]$ from $V(T)\setminus (D\cup E_A\cup F)$ to $v$. 

We call a vertex $v\in V_\gamma$ \textit{alternating-forwards-safe}
if for any set $F\not\ni v$ of at most $k-1$ vertices, there is a directed alternating path (possibly of length $0$)
in $T-F$ from $v$ to $V(T)\setminus (D\cup E_B\cup F)$. 
Similarly, we say that $v\in V_\gamma$ is \textit{alternating-backwards-safe}
if for any set $F\not\ni v$ of at most $k-1$ vertices, there is a directed alternating path (possibly of length $0$)
in $T-F$ from $V(T)\setminus (D\cup E_A\cup F)$ to $v$. 

We say that a vertex $v$ is \textit{safe} if it is safe in all four respects.

Note that the following properties are satisfied at every step (for each $\{\gamma,\delta\} =\{\alpha,\beta\}$):
\begin{itemize}
\item[(S1)] all coloured vertices in $V(T)\setminus (D\cup E)$ are safe,
\item[(S2)] all coloured vertices in $V(T) \setminus (D\cup E_B)$ are forwards-safe as well as alternating-forwards-safe and all coloured vertices in
$V(T) \setminus (D\cup E_A)$ are backwards-safe as well as alternating-backwards-safe,
\item[(S3)] if $v\in V_\gamma$ has at least $k$ forwards-safe out-neighbours of colour $\gamma$ then $v$ itself is forwards-safe,
the analogue holds if $v$ has at least $k$ backwards-safe in-neighbours of colour $\gamma$,
\item[(S4)] if $v\in V_\gamma$ has at least $k$ alternating-forwards-safe out-neighbours of colour $\delta$ with $\delta\neq \gamma$ then $v$ itself is alternating-forwards-safe,
the analogue holds if $v$ has at least $k$ alternating-backwards-safe in-neighbours of colour $\delta$,
\item[(S5)] if $v\in V_\gamma$ is safe and in the next step we colour some more (previously uncoloured) vertices then $v$ is still safe.
\end{itemize}

In what follows, by a (partial) colouring of the vertices of $T$ we always mean a colouring with colours $\alpha$ and $\beta$
in which all the vertices in 
\begin{align*}
D_1: & = \bigcup_{i\in [k]} (A_i \cup B_i) \cup \bigcup_{i\in [3k+1, 5k]} (A_i \setminus\{a_i\}) \cup  \bigcup_{i\in [3k+1, 4k]\cup [5k+1,6k]} (B_i\setminus\{b_i\})\\
&\cup \{a_i \mid i\in [2k+1, 3k]\cup [5k+1,6k]\}\cup \{b_i \mid i\in [2k+1, 3k]\cup [4k+1,5k]\}
\end{align*}
are coloured $\alpha$, and all the vertices in $D_2:=D\setminus D_1$ are coloured $\beta$.

\medskip

\noindent
{\bf Claim 0:} \textit{Suppose that there are paths $P_{1}, \dots,P_{6k}$ of $T$ satisfying the following properties:
\begin{itemize}
\item for each $i\in [6k]$ the path $P_i$ joins $b_i$ to $a_i$,
\item the paths $P_1,\dots,P_{6k}$ are disjoint from each other and
meet $D$ only in their endvertices.
\end{itemize}
Suppose that we have coloured all vertices of $T$ such that
\begin{itemize}
\item every vertex in $D_1\cup V(P_1)\cup \dots \cup V(P_k)$ is coloured $\alpha$,
\item every vertex in $D_2\cup V(P_{k+1})\cup \dots \cup V(P_{2k})$ is coloured $\beta$,
\item $P_{2k+1},\dots, P_{6k}$ are alternating,
\item every vertex is safe.
\end{itemize}
Then the sets $V_1:=V_\alpha$ and $V_2:=V_\beta$ form a partition of $V(T)$ as required in Theorem~\ref{main theorem}.}

\smallskip

Note that the conditions of Claim~0 imply that $P_i$ must be an even path for $i\in [2k+1,4k]$ and an odd path for $i\in [4k+1,6k]$.

To prove Claim~0, we first show that $T[V_\alpha]$ is strongly $k$-connected. So consider any set $F$ of at most $k-1$ vertices
and any two vertices $x,y\in V_\alpha\setminus F$.
We need to check that $T[V_\alpha\setminus F]$ contains a path from
$x$ to $y$. Since $x$ is forwards-safe there exists a path $Q_x$ in $T[V_\alpha\setminus F]$ from $x$
to some vertex $x'\in V_\alpha\setminus (D\cup E_B\cup F)$. Similarly, since $y$ is backwards-safe there exists a
path $Q_y$ in $T[V_\alpha\setminus F]$ 
from some vertex $y'\in V_\alpha\setminus ( D\cup E_A\cup F)$ to $y$.
Let $i\in [k]$ be such that $F$ avoids $A_{i}\cup V(P_i)\cup B_{i}$.
Since $x'\notin D\cup E_B$, (D4) implies
that $x'$ sends an edge to $B_{i}$. Similarly, since $y'\notin D\cup E_A$, (D3) implies
that $y'$ receives an edge from $A_{i}$. Altogether this implies that
$T[V(Q_x)\cup V(Q_y)\cup A_{i}\cup V(P_i)\cup B_{i}]\subseteq T[V_\alpha\setminus F]$
contains path from $x$ to $y$, as desired.

A similar argument shows that $V_\beta$ is strongly $k$-connected too.
It remains to show that $T[V_\alpha,V_\beta]$ is stongly $k$-connected. Consider any set $F$ of at most $k-1$ vertices and any two vertices
$x,y \in V(T)\setminus F$.
We will show that there is an alternating path between $x$ and $y$ avoiding $F$.
Since $x$ is alternating-forwards-safe there exists an alternating path $Q_x$ in $T[V_\alpha, V_\beta]-F$ from $x$
to some vertex $x'\in V(T) \setminus (D\cup E_B\cup F)$. Similarly, since $y$ is backwards-safe there exists a
path $Q_y$ in $T[V_\alpha,V_\beta]-F$ 
from some vertex $y'\in V[T]\setminus ( D\cup E_A\cup F)$ to $y$. We now choose an index $i$ as follows:
\begin{itemize}
\item If $x',y' \in V_\alpha$, let $i\in [2k+1,3k]$ be such that $F$ avoids $A_i\cup V(P_i) \cup B_i$.
\item If $x',y' \in V_\beta$, let $i\in [3k+1,4k]$ be such that $F$ avoids $A_i\cup V(P_i) \cup B_i$.
\item If $x' \in V_\alpha$ and $y'\in V_\beta$, let $i\in [4k+1,5k]$ be such that $F$ avoids $A_i\cup V(P_i) \cup B_i$.
\item If $x' \in V_\beta$ and $y'\in V_\alpha$, let $i\in [5k+1,6k]$ be such that $F$ avoids $A_i\cup V(P_i) \cup B_i$.
\end{itemize}
Since $x'\notin D\cup E_B$, (D4) implies that $x'$ sends an edge to $B_{i}\setminus\{b_i\}$. Similarly, since $y'\notin D\cup E_A$, (D3) implies
that $y'$ receives an edge from $A_{i}\setminus\{a_i\}$. Altogether this implies that
$T[V(Q_x)\cup V(Q_y)\cup A_{i}\cup V(P_i)\cup B_{i}]\subseteq T-F$ contains an alternating path from $x$ to $y$, as desired.
This completes the proof of Claim~0.

\medskip

\noindent
{\bf Claim 1:} \textit{Consider a partial colouring of $V(T)$ and let $C$ denote the set of previously coloured vertices.
(So $D\subseteq C$.) Let $Z\subseteq V(T)\setminus (X\cup Y)$ and $N\subseteq V(T)\setminus Z$
and suppose that $9k^2|Z|+|C\cup N|\le 5\cdot 10^8 k^6\log(2k)$. Then for every colouring of the vertices in $Z\setminus C$ there
is a set $Z'\subseteq V(T)\setminus (Z\cup N\cup C)$ and a colouring of the vertices in $Z'$ such that every vertex in $Z\cup Z'$
is safe and $|Z\cup Z'|\le 9k^2|Z|$.}

\smallskip

To prove Claim~1, note that the strong $10^9k^6\log(2k)$-connectivity of $T$ implies that $\delta^0(T)\geq 10^9k^6\log(2k)$.
Hence%
   \COMMENT{We need a bound on $\hat{\delta}^+(T)-5k|E|$ (instead of $\hat{\delta}^+(T)-|E|$) later on in the proof of Claim~4.}
\begin{equation}\label{referee equation E_A}
\hat{\delta}^-(T)-5k|E_A| \stackrel{(\ref{E_A size})}{\ge } \frac{\hat{\delta}^-(T)}{2}\ge \frac{\delta^0(T)}{2}
\ge  5\cdot 10^8 k^6\log(2k),
\end{equation}
and similarly
\begin{equation}\label{referee equation E}
\hat{\delta}^+(T)-5k|E| \stackrel{(\ref{E size})}{\ge } \frac{\hat{\delta}^+(T)}{2}\ge  5\cdot 10^8k^6\log(2k).
\end{equation}

Consider any colouring of $Z\setminus C$. 
For each vertex $z\in Z$  in turn we greedily choose $2k$ uncoloured in-neighbours outside $N\cup E_A$,
and colour $k$ of them $\alpha$ and the remaining $k$ by $\beta$. (We do not modify $C$ in this process.) To see that
we can choose all these vertices to be distinct from each other, note that the total number of vertices we wish to choose is $2k|Z|$ and
\begin{equation*}
|C\cup N\cup Z|+2k|Z|\le 5\cdot 10^8 k^6\log(2k) \stackrel{(\ref{referee equation E_A})}{\le} \hat{\delta}^-(T)-|E_A|.
\end{equation*}
For each vertex in $Z$ as well as for each of the $2k|Z|$ vertices that we coloured in the previous step in turn, we greedily choose $2k$
uncoloured out-neighbours outside $N \cup E$, and colour $k$ of them by $\alpha$ and the remaining $k$ by $\beta$. To see that
we can choose all these vertices to be distinct from each other, note that the total number of vertices we wish to choose is $2k(1+2k)|Z|$ and
$$
|C\cup N\cup Z|+2k|Z|+2k(1+2k)|Z|\le |C\cup N|+9k^2|Z|\le 5\cdot 10^8k^6\log(2k)\stackrel{(\ref{referee equation E})}{\le} \hat{\delta}^-(T)-|E|.
$$
Let $Z'$ be the set of vertices outside $C\cup Z$ that we coloured. Then $Z'\cap N=\emptyset$.
Moreover, using (S1)--(S4) it is easy to check that every vertex in $Z\cup Z'$ is safe.
This completes the proof of Claim~1.

\medskip

Recall that we have already coloured all the vertices in $D_1$ by $\alpha$ and all the vertices in $D_2$ by $\beta$.
Step by step, we will now colour further vertices of $T$. Our final aim is to arrive at a colouring of $V(T)$ which is as described
in Claim~0. The first step is to colour some more vertices in order to achieve that all the coloured vertices are safe.
In what follows, when saying that we colour some additional vertices we always mean that these vertices are uncoloured so far.

\medskip

\noindent {\bf Claim 2:} \textit{We can colour some additional vertices of $T$ in such a way that every coloured vertex is safe
and the set $C_1$ consisting of all vertices coloured so far satisfies} $|C_1|\le 1500k^4$.

\smallskip

To prove Claim~2, for every $v\in \{x_1,\dots, x_{6k},y_1,\dots,y_{6k}\}$ in turn, we greedily choose $2k$ uncoloured in-neighbours
and $2k$ uncoloured out-neighbours, all distinct from each other, and colour $k$ of the in-neighbours and $k$ of the out-neighbours by $\alpha$ and the remaining $2k$ in/out-neighbours by $\beta$.

Let $Z^*$ denote the set of $4k\cdot 12k= 48k^2$ new vertices we just coloured and let $Z:=Z^*\cup (D\setminus (X\cup Y))$.
Then $|Z|\le |Z^*|+|D|\le 48k^2+ c\cdot 12k\le 156k^2$. Apply Claim~1 with $N:=\emptyset$ to find a set $Z'$ of uncoloured vertices and a colouring of these vertices such that
all the vertices in $Z\cup Z'$ are safe and $|Z\cup Z'|\le 9k^2\cdot |Z|\le 1500 k^4$. Our choice of $Z^*$ and (S3), (S4) together now imply
that the vertices in $X\cup Y$ are safe as well. This completes the proof of Claim~2.

\medskip

Suppose that $P$ is a path whose endvertices are already coloured, but whose internal vertices are still uncoloured.
We say that we colour (the internal vertices of) $P$ in an \textit{alternating manner consistent with its endvertices} if the
colouring results in an alternating path. (So for example, if the endvertices of $P$ have the same colour, then $P$ needs to be
an even path.)

\medskip

\noindent {\bf Claim 3:} \textit{There are paths $P_1,P_2,\dots, P_{6k}$ of $T$ satisfying the following properties:
\begin{enumerate}[{\rm (i)}]
\item for each $i\in [6k]$, the path $P_i$ joins $b_i$ to $a_i$,
\item the paths $P_1,\dots, P_{6k}$ are disjoint from each other and meet $C_1$ only in their endvertices,
\item we can colour the internal vertices of $P_1,\dots, P_k$ by $\alpha$, the internal vertices of $P_{k+1},\dots, P_{2k}$ by $\beta$ and
the internal vertices of $P_{2k+1},\dots, P_{6k}$ in an alternating manner consistent with their endvertices and can colour some additional vertices
such that the set $C_4$ of all coloured vertices satisfies the following properties:
\begin{itemize}
\item[{\rm (a)}] all vertices in $C_4$ are safe,
\item[{\rm (b)}] there is a set $C^0\subseteq C_4$ such that the number of coloured vertices outside $C^0$ is at most $3\cdot 10^7 k^6\log(2k)$,
\item[{\rm (c)}] every vertex outside $C_4$ which has an in-neighbour in
$C^0$ has at least $k$ in-neighbours of each colour, 
and every vertex outside $C_4$ which has an out-neighbour in $C^0$
has at least $k$ out-neighbours of each colour.
\end{itemize}
\end{enumerate}}

We will prove Claim~3 via a sequence of subclaims. 
For $i\in [6k]$ we define an $i$-\textit{path}
to be a directed path from the sink $b_i$ of $B_i$ to the source $a_i$ of $A_i$ whose internal vertices lie outside~$C_1$.
Ideally, we would like to find disjoint $i$-paths $P_i$ (one for each $i\in [6k]$) such that the following properties hold:
\begin{enumerate}
\item we can colour all the internal vertices of $P_1,\dots, P_k$ by $\alpha$, the internal vertices of
$P_{k+1},\dots, P_{2k}$ by $\beta$ and
the internal vertices of $P_{2k+1},\dots, P_{6k}$ in an alternating manner consistent with their endvertices,
\item by colouring some additional vertices we can achieve that all coloured vertices are safe.
\end{enumerate}
For each $i\in  [6k]$ we will first try to find a short $i$-path $P_i$ such that all these
$i$-paths are disjoint and such that for each $i\in [2k+1,6k]$ the length of the path $P_i$ has the correct parity in order
to ensure that the internal vertices of $P_i$ can be coloured in an alternating manner consistent with the endvertices
of $P_i$ (so $P_i$ needs to be even for $i\in [2k+1,4k]$ and odd for $i\in [4k+1,6k]$).
We will then colour the vertices on these short $i$-paths as well as
some additional vertices such that (1) and (2) are satisfied for the set $I_{short}$
of all indices $i$ for which we have been able to choose a short $i$-path (see Claim~3.1). This provides some of the paths required in Claim~3.
To find the remaining paths, for all $i\notin I_{short}$ we will choose $10^5 k^4\log(2k)$ $i$-paths $Q_{i,1},\dots,Q_{i,10^5 k^4\log(2k)}$
such that all these paths are internally disjoint from each other. For each $i\notin I_{short}$ with $i\in [2k]$
there will be three distinct indices $j_{i,1},j_{i,2},j_{i,3}\in [10^5 k^4\log(2k)]$ such that the path $P_i$ required
in Claim~3 will consist of an initial segment of $Q_{i,j_{i,1}}$, a middle segment of $Q_{i,j_{i,2}}$, a final
segment of $Q_{i,j_{i,3}}$ as well as two edges joining these three segments. Similarly, for each $i\notin I_{short}$ with $i\in [2k+1,6k]$
the path $P_i$ required in Claim~3 will either be one of the $Q_{i,j}$ or will consist of an initial segment of
$Q_{i,j_{i,1}}$ and a final segment of $Q_{i,j_{i,2}}$ as well as an edge joining these two segments. 

We will now choose the short $i$-paths.
Let $\mathcal{P}_{short}^{correct}$ be a collection of $i$-paths satisfying the following properties:
\begin{itemize}
\item[(P1)] for each $i\in [6k]$, $\mathcal{P}_{short}^{correct}$ contains at most one $i$-path,
\item[(P2)] all the paths in $\mathcal{P}_{short}^{correct}$ are disjoint from each other,
\item[(P3)] each path has length at most $10k+10$,
\item[(P4)] for each $i\in [2k+1,6k]$ for which $\mathcal{P}_{short}^{correct}$ contains an $i$-path, this path $P_i$ has the correct parity,
meaning that $P_i$ is even if $i\in [2k+1,4k]$ and odd if $i\in [4k+1,6k]$,
\item[(P5)] subject to the above conditions, $|\mathcal{P}_{short}^{correct}|$ is as large as possible.
\end{itemize}
Let $I_{short}^{correct}$ be the set of all those indices $i\in [6k]$ for which $\mathcal{P}_{short}^{correct}$
contains an $i$-path, and let $P_i$ denote this $i$-path. 
Let $V_{short}^{correct}$ be the set of all internal vertices of the $P_i$ for all $i \in I_{short}^{correct}$.
Moreover, set $I_{long}:= [6k]\setminus I_{short}^{correct}$.
Recall that the definition of an $i$-path implies that all the vertices in $V_{short}^{correct}$ are uncoloured so far (i.e.~$V_{short}^{correct}\cap C_1=\emptyset$).

\medskip

\noindent{\bf Claim 3.1:} \textit{We may colour all vertices in} $V_{short}^{correct}$ \textit{as well as some additional vertices of} $T$
\textit{such that the following properties hold:
\begin{enumerate}[{\rm (i)}]
\item for each $i\in I_{short}^{correct}$, all the vertices on $P_i$ are coloured $\alpha$ if $i\in [k]$ and $\beta$ if $i\in [k+1,2k]$,
\item for each $i\in I_{short}^{correct}\setminus [2k]$, $P_i$ is coloured in an alternating manner consistent with its endvertices,
\item the set $C_2$ consisting of all vertices coloured so far has size $|C_2|\le 4000k^4$ and all vertices in $C_2$ are safe,
\item for each $i\in I_{long}$, any $i$-path whose internal vertices lie in $V(T)\setminus C_2$ is either $b_ia_i$ or has length at least $10k+10$.
\end{enumerate}}

\smallskip

To prove Claim~3.1, consider any $i\in I_{short}^{correct}$ and colour all internal vertices of $P_i$ by $\alpha$ if $i\in [k]$, by $\beta$ if $i\in [k+1,2k]$,
and in an alternating manner consistent with the endvertices of $P_i$ if $i\in [2k+1,6k]$ (this is possible by (P4)). Note that $|V_{short}^{correct}|\le 6k(10k+9)\le 120k^2$.
Together with Claim~1 (applied with $N:=\emptyset$ and $Z:=V_{short}^{correct}$)
and Claim~2 this implies Claim~3.1(i)--(iii), with room to spare in (iii). Indeed, the set $C'_2$ of vertices coloured so far has size $|C'_2|\le 3000k^4$.%
   \COMMENT{$|C'_2|\le |C_1|+9k^2\cdot 120k^2\le 1500k^4+1200k^4$}

We will now colour some additional vertices to ensure that (iv) holds too.
Consider any $i\in I_{long}$. If there exists an $i$-path $P$ whose internal vertices lie in $V(T)\setminus C'_2$ and whose length is at most $10k+9$, then $P$ must have incorrect parity,
i.e.~$P$ is odd if $i\in [2k+1, 4k]$ and even if $i\in [4k+1, 6k]$. Note that there cannot be two such $i$-paths of length at least two which are internally disjoint from each other.
Indeed, if $P=v_1\dots v_a$ and $P'=v'_1 \dots v'_{a'}$ are two such $i$-paths which are internally disjoint, then we may assume that
$v_2v'_2\in E(T)$ and so $v_1v_2v'_2v'_3\dots v'_{a'}$ is an $i$-path of length at most $10k+10$
with the correct parity which is disjoint from all the other paths in $\mathcal{P}_{short}^{correct}$, a contradiction to~(P5).

Let $\mathcal{P}_{short}^{incorrect}$ be a collection of $i$-paths whose internal vertices lie in $V(T)\setminus C'_2$ and whose length is at least two and at most $10k+9$,
such that all these paths are disjoint from each other and, subject to these properties, such that $|\mathcal{P}_{short}^{incorrect}|$ is as large as possible.
Let $V_{short}^{incorrect}$ be the set of all internal vertices on these paths. Thus $|V_{short}^{incorrect}|\le 4k\cdot (10k+8)\le 100k^2$.
Colour all vertices in $V_{short}^{incorrect}$ with $\alpha$ and apply Claim~1 again (with $N:=\emptyset$ and $Z:=V_{short}^{incorrect}$).
Then the set $C_2$ of all vertices coloured so far satisfies
$|C_2|\le 3000k^2+9k^2\cdot 100k^2\le 4000k^4$, so (iii) still holds. Moreover, now (iv) holds too. This completes the proof of Claim~3.1.

\medskip

Claim~3.1(iii) implies that all uncoloured vertices together with
the $a_i$ and $b_i$ for all $i \in I_{long}$ induce a strongly $(7 \cdot 452 \cdot 10^5 k^5\log(2k))$-connected subtournament $T'$ of~$T$ (with some room to spare).
Theorem \ref{connectivity implies linkedness} implies that $T'$ is $7\cdot 10^5 k^5\log(2k)$-linked.
Together with Proposition~\ref{linkedness alt def} this implies that for each $i \in I_{long}$ we can find $10^5 k^4\log(2k)$
$i$-paths in $T'$ such that all these $10^5 k^4\log(2k) |I_{long}|$ paths have length at least two and are internally disjoint from each other and
such that the internal vertices on all these paths
lie outside~$C_2$. We choose this collection of $10^5 k^4\log(2k) |I_{long}|$ paths such that the set $V_{long}$ of all internal vertices on these paths
is as small as possible. For all $i\in I_{long}$ and all $j\in [10^5 k^4\log(2k)]$, let $Q_{i,j}$ denote the $j$th $i$-path we chose.
Write $Q_{i,j} = q_{i,j}^0q_{i,j}^1\dots q_{i,j}^{|Q_{i,j}|}$, so that $q_{i,j}^0$ is $b_i$ and $q_{i,j}^{|Q_{i,j}|}$ is $a_i$.
Claim~3.1(iv) implies that each $Q_{i,j}$ must have length at least $10k+10$. Moreover, the minimality of $|V_{long}|$ implies the following:
\begin{enumerate}
\item[(Q1)] the interior of each $Q_{i,j}$ induces a backwards-transitive path,
\item[(Q2)] if $v\in V(T)\setminus (C_2\cup V_{long})$ is an out-neighbour of $q_{i,j}^s$, then $v$ is also an out-neighbour of
$q_{i,j}^{s'}$ for all $s' \geq s+3$,
\item[(Q3)] if $v\in V(T)\setminus (C_2\cup V_{long})$ is an in-neighbour of $q_{i,j}^s$, then $v$ is also an in-neighbour of
$q_{i,j}^{s'}$ for all $s' \leq s-3$.
\end{enumerate}
Let ${\rm int}(Q_{i,j}):=q_{i,j}^1\dots q_{i,j}^{|Q_{i,j}|-1}$ denote the interior of $Q_{i,j}$.
Let $Q_{i,j}^1, \dots, Q_{i,j}^9$ be disjoint segments of ${\rm int}(Q_{i,j})$ such that ${\rm int}(Q_{i,j}) = Q_{i,j}^1\dots Q_{i,j}^9$,
$|Q_{i,j}^1|=|Q_{i,j}^2|=|Q_{i,j}^8|=|Q_{i,j}^9|=k$, $|Q_{i,j}^3|=|Q_{i,j}^7|=k+2$ and $|Q_{i,j}^4|=|Q_{i,j}^6|=2k+2$.
We let $$Q_{i,j}^0 := Q_{i,j}^1\cup Q_{i,j}^2\cup Q_{i,j}^3  \cup Q_{i,j}^7\cup Q_{i,j}^8\cup Q_{i,j}^9$$
and write 
$$V^0_{long}:=\bigcup_{(i,j)\in I_{long}\times [10^5 k^4 \log(2k)]} V(Q^0_{i,j}) \ \ \ \text{ and } \ \ \
\overline{V}_{long}:=\bigcup_{(i,j)\in I_{long}\times [10^5 k^4 \log(2k)]} V(Q^{0}_{i,j}\cup Q^{4}_{i,j} \cup Q^{6}_{i,j}).
$$ Thus $V^0_{long}\subseteq \overline{V}_{long}\subseteq V_{long}$ and
$$|V^0_{long}|\le |\overline{V}_{long}|\le (10k+8)\cdot 6k\cdot 10^5 k^4 \log(2k) \le 2\cdot 10^7 k^6\log(2k).$$

\noindent{\bf Claim 3.2:} \textit{There exist disjoint index sets $I_{R,\alpha}, I_{R,\beta} \subseteq I_{long}\times [10^5 k^4 \log(2k)]$
such that, writing
$$R_\alpha := \bigcup_{(i,j)\in I_{R,\alpha}} V(Q_{i,j}^{0}) \ \ \ \text{and} \ \ \ R_\beta :=\bigcup_{(i,j)\in I_{R,\beta}} V(Q_{i,j}^{0}), $$ 
for each $(i,j) \in I_{long}\times [10^5 k^4 \log(2k)]$  every vertex in $V(Q_{i,j}^{0})\setminus (R_\alpha\cup R_\beta)$ has at least $k$
in-neighbours and at least $k$ out-neighbours in each of $R_\alpha$ and $R_\beta$.
Also $|I_{R,\alpha}|,|I_{R,\beta}|\le 100 k\log(2k)$ and $|R_\alpha|,|R_\beta| \leq  1000 k^2\log(2k)$.}

\smallskip 

To prove Claim~3.2, apply Proposition~\ref{index selection} to $T[V^0_{long}]$ to find a
set $Z_\alpha \subseteq V^0_{long}$ with $|Z_\alpha| \le 3k \log |V^0_{long}| \le 100k \log(2k)$ and%
  \COMMENT{$3k \log |V^0_{long}|\le 3k\log (2\cdot 10^7 k^6\log(2k))\le 3k(1+7\log 10+6\log k+\log\log(2k))
  \le 3k\cdot 33\log(2k)$}
such that every vertex in $V^0_{long} \setminus Z_\alpha$ has at least $k$ out-neighbours and $k$ in-neighbours in $Z_\alpha$.
Let $I_{R,\alpha} := \{(i,j): V(Q_{i,j}^{0}) \cap Z_\alpha \neq \emptyset\}$ and $I':=(I_{long}\times[10^5 k^4 \log(2k)]) \setminus I_{R,\alpha}$.
We now consider $W := \bigcup_{(i,j) \in I'} V(Q_{i,j}^{0})$.
By Proposition~\ref{index selection} applied to $T[W]$, 
there exists a set $Z_\beta \subseteq W$ with $|Z_\beta| \le 3k \log |W| \le 100k \log(2k)$ and such that every vertex in
$W \setminus Z_\beta$ has at least $k$ out-neighbours and in-neighbours in $Z_\beta$. Let
$I_{R,\beta} := \{(i,j)\in I': V(Q_{i,j}^{0}) \cap Z_\beta \neq \emptyset\}$.

Let $R_\alpha$ and $R_\beta$ be as defined in the statement of Claim~3.2.
Then by definition of $I_{R,\alpha}$ and $I_{R,\beta}$,
for each $(i,j)\in I_{long}\times [10^5 k^4 \log(2k)]$
every vertex in $V(Q_{i,j}^{0})\setminus (R_\alpha\cup R_\beta)$ has at least
$k$ in-neighbours and at least $k$ out-neighbours in each of $R_\alpha$ and $R_\beta$.
Also $|R_\alpha|,|R_\beta| \leq  (6k+4) \cdot 100k\log(2k) \le 1000k^2 \log(2k)$.
This completes the proof of Claim 3.2.

\medskip

Let $I_{R}:= I_{R,\alpha}\cup I_{R,\beta}$, $R:=R_\alpha\cup R_\beta$
and $$R^{4,6} := \bigcup_{(i,j)\in I_R} V(Q_{i,j}^4 \cup Q_{i,j}^6).$$

\medskip

\noindent {\bf Claim 3.3:} \textit{We may colour all vertices in $R_\alpha\cup R_\beta\cup R^{4,6}$ as well as some additional
vertices lying outside $\overline{V}_{long}$ such that
\begin{enumerate}[{\rm (i)}]
\item all vertices in $R_\alpha$ are coloured $\alpha$, all vertices in $R_\beta$ are coloured $\beta$,
\item for each $(i,j)\in I_{R}$ and each $s\in \{4,6\}$, $Q_{i,j}^s$ is an alternating path,
\item all coloured vertices are safe,
\item the set $C_3$ consisting of all vertices coloured so far has size $|C_3|\le  4\cdot 10^4 k^4\log(2k)$.
\end{enumerate}}

To prove Claim~3.3, colour the vertices in $R_\alpha\cup R_\beta\cup R^{4,6}$ such that (i) and (ii) hold.
Apply Claim~1 with $C_2$, $R_\alpha\cup R_\beta\cup R^{4,6}$, $\overline{V}_{long}\setminus (R_\alpha\cup R_\beta\cup R^{4,6})$
playing the roles of $C$, $Z$, $N$ to obtain a set $Z'\subseteq V(T)\setminus (\overline{V}_{long}\cup C_2)$
and a colouring of the vertices in $Z'$ such that every vertex in $R_\alpha\cup R_\beta\cup R^{4,6} \cup Z'$ is safe and%
   \COMMENT{The calculation below also shows that Claim~1 can be applied, indeed
$9k^2|Z|+|C_2\cup \overline{V}_{long}|\le 4\cdot 10^4 k^4\log(2k)+4000k^4+|\overline{V}_{long}|\le 4\cdot 10^4 k^4\log(2k)+4000k^4+2\cdot 10^7k^6\log(2k)$}  
\begin{align*}
|C_3| & \leq |C_2|+|R_\alpha\cup R_\beta\cup R^{4,6} \cup Z'|\le 4000k^4+ 9k^2 \cdot  (2\cdot 1000k^2\log(2k)+ (4k+4)\cdot 200k\log(2k))\\
& \le 4\cdot 10^4 k^4\log(2k).
\end{align*}
This completes the proof of Claim~3.3.

\medskip 

\noindent {\bf Claim 3.4:} \textit{For each $i\in I_{long}$ there is an $i$-path $P_i$ such that the following properties hold:}
\begin{enumerate}[{\rm (i)}]
\item \textit{$P_i$ has no internal vertices in $C_3$, and $P_i$ and $P_{i'}$ are disjoint whenever $i\neq i'$,}

\item \textit{if $i\in I_{long}\cap [2k]$, then there exists three distinct indices $j_{i,1}, j_{i,2}, j_{i,3}\in [10^5 k^4 \log(2k)]$
such that $P_i = b_iQ_{i,j_{i,1}}^1Q_{i,j_{i,1}}^2 q_{i,j_{i,1}}^{2k+1}
Q_{i,j_{i,2}}^3\dots Q_{i,j_{i,2}}^7 q_{i,j_{i,3}}^{|Q_{i,j_{i,3}}|-2k-1}Q_{i,j_{i,3}}^8Q_{i,j_{i,3}}^9a_i$,}

\item \textit{if $i\in I_{long}\cap [2k+1,6k]$, then either $P_i=Q_{i,j_i}$ for some $j_i\in [10^5 k^4 \log(2k)]$ or
there exist distinct $j_{i,1}, j_{i,2}\in [10^5 k^4 \log(2k)]$ such that 
$P_i = b_iQ_{i,j_{i,1}}^1\dots Q_{i,j_{i,1}}^4 q_{i,j_{i,1}}^{5k+5}Q_{i,j_{i,2}}^5\dots Q_{i,j_{i,2}}^9a_i$,}

\item \textit{$P_i$ is even if $i\in I_{long}\cap [2k+1,4k]$ and odd if $i\in I_{long}\cap [4k+1,6k]$.}
\end{enumerate}

(Recall that $q_{i,j_{i,1}}^{2k+1}$ is the first vertex of $Q_{i,j_{i,1}}^3$, $q_{i,j_{i,3}}^{|Q_{i,j_{i,3}}|-2k-1}$ is the last vertex of
$Q_{i,j_{i,3}}^7$ and $q_{i,j_{i,1}}^{5k+5}$ is the first vertex of $Q_{i,j_{i,1}}^5$.)
To prove Claim~3.4, note that since $|C_3|\leq 4\cdot 10^4 k^4\log(2k) < 10^5k^4\log(2k) - 5$, for each $i\in I_{long}$ there are at
least five paths $Q_{i,s_{i,1}}, Q_{i,s_{i,2}}, Q_{i,s_{i,3}}, Q_{i,s_{i,4}}, Q_{i,s_{i,5}}$ whose internal vertices avoid~$C_3$. 

Suppose first that $i\in  I_{long}\cap [2k]$. Consider the subtournament $T_i$ of $T$ spanned by $q_{i,s_{i,t}}^{|Q_{i,s_{i,t}}|-2k-1}$
for $t=1,2,3,4,5$. $T_i$ contains at least two vertices of out-degree at least two, assume they are 
$q_{i,s_{i,1}}^{|Q_{i,s_{i,1}}|-2k-1}, q_{i,s_{i,2}}^{|Q_{i,s_{i,2}}|-2k-1}$. We may also assume that $q_{i,s_{i,1}}^{2k+1}$ sends
an edge to $q_{i,s_{i,2}}^{2k+1}$. Finally, since $q_{i,s_{i,2}}^{|Q_{i,s_{i,2}}|-2k-1}$ has at least two
outneighbours in $T_i$, we may assume $q_{i,s_{i,2}}^{|Q_{i,s_{i,2}}|-2k-1}$ sends an edge to $q_{i,s_{i,3}}^{|Q_{i,s_{i,3}}|-2k-1}$.
We set $j_{i,t} := s_{i,t}$ and let $P_i$ be as described in Claim 3.4(ii).

So suppose next that $i\in I_{long} \cap [2k+1,4k]$. If $Q_{i,s_{i,t}}$ is an even path for $t=1$ or $t=2$ we take it to be $P_i$.
So suppose that these two paths are odd. We may assume that $q_{i,s_{i,1}}^{5k+5}$ sends an edge to $q_{i,s_{i,2}}^{5k+5}$.
We set $j_{i,1}:=s_{i,1}$ and $j_{i,2}:=s_{i,2}$ and let $P_i$ be as described in Claim~3.4(iii).
If $i\in I_{long} \cap [4k+1,6k]$, we define $P_i$ similarly. This completes the proof of Claim~3.4.

\medskip

We are now ready to prove Claim~3. For each $i\in I_{long}$, let $P_i$ be as given by Claim 3.4.
We will colour all those vertices on the paths $Q_{i,j}$ with
$(i,j)\in I_{long}\times [10^5 k^4 \log(2k)]$ which are uncoloured so far as follows. 

For each $i\in I_{long}\cap [2k]$, we colour all internal vertices of $P_i$ by $\alpha$ if $i\leq k$ and by $\beta$ if $i>k$.  
For each $i\in I_{long} \cap [k]$, we also colour all vertices in $(Q_{i,j}^1\cup Q_{i,j}^9) \setminus (V(P_i)\cup R)$ by $\alpha$ and
all vertices in $(Q_{i,j}^2\cup Q_{i,j}^3\cup Q_{i,j}^7\cup Q_{i,j}^8) \setminus (V(P_i)\cup R)$ by $\beta$ (for all $j\in [10^5 k^4 \log(2k)]$).
Similarly, for each $i\in I_{long} \cap [k+1,2k]$, we colour all vertices in $(Q_{i,j}^1\cup Q_{i,j}^9) \setminus (V(P_i)\cup R)$ by $\beta$
and all vertices in $(Q_{i,j}^2\cup Q_{i,j}^3\cup Q_{i,j}^7\cup Q_{i,j}^8) \setminus (V(P_i)\cup R)$ by $\alpha$.
For each $i\in I_{long}\cap [2k]$, we colour all vertices in $(Q_{i,j}^4\cup Q_{i,j}^6)\setminus (V(P_i)\cup R^{4,6})$ by $\alpha$.

For each $i\in I_{long}\cap [2k+1,6k]$, we colour all internal vertices of $P_i$ in an alternating manner consistent with the endvertices of $P_i$.
(Claim~3.4(iv) ensures that this is possible.) For all $j\in [10^5 k^4 \log(2k)]$ we also colour all vertices in
$Q_{i,j}^0\cup Q_{i,j}^4\cup Q_{i,j}^6 \setminus (V(P_i)\cup R\cup R^{4,6})$ in an alternating manner. (That is, if $b_i=q_{i,j}^0$ is coloured $\alpha$, we colour
$q_{i,j}^s$ by $\alpha$ for all even numbers $s\leq 5k+4$, and colour $q_{i,j}^s$ by $\beta$ for all odd numbers $s\leq 5k+4$.
We colour of each vertex $x$ in $(Q_{i,j}^6\cup \dots \cup Q_{i,j}^9)\setminus (V(P_i)\cup R\cup R^{4,6})$ in a similar way, depending on the colour of $a_i$
and the distance of $x$ to $a_i$ in $Q_{i,j}$.)

Now all uncoloured vertices of $V_{long}$ belong to $Q_{i,j}^5$ for some $i,j$. We let $C^0$ be the union of $V(Q_{i,j}^5)$ over all
$(i,j)\in I_{long}\times [10^5 k^4 \log(2k)]$. We colour all uncoloured vertices in $C^0$ by $\alpha$, and let
$C_4$ denote the set consisting of all the vertices coloured so far.
Note that $|C_4 \setminus C^0| \leq |C_3| + |\overline{V}_{long}| \leq 4\cdot 10^4 k^4\log(2k) +  2\cdot 10^7 k^6\log(2k) \leq 3\cdot 10^7 k^6\log(2k)$. 
Together with Claim~3.1 this implies that parts (i), (ii) and (iii)(b) of Claim~3 hold.

We now show that all the vertices on the paths $Q_{i,j}$ are safe. Together with Claim~3.3(iii) this will imply that all vertices in $C_4$ are safe, i.e.~Claim~3(iii)(a) will
hold. Consider first any vertex $v\in V^0_{long}$. If $v\in R$, then $v$ is safe by Claim 3.3(iii). If $v\notin R$, then by Claim~3.2 $v$ has at least $k$ out-neighbors
and at least $k$ in-neighbours in each of $R_\alpha$ and $R_\beta$, so it has $k$ safe out-neighbours and $k$ safe in-neighbours of each colour.
Thus $v$ is safe by (S3) and (S4). So all the the vertices in $V^0_{long}$ are safe.

Note that if $(i,j)\notin I_{R}$, then $V(Q_{i,j}^1\cup Q_{i,j}^2 \cup Q_{i,j}^3)\setminus \{q_{i,j}^{3k+2}\}$ contains at least $k$ vertices of each colour
and so does $V(Q_{i,j}^7 \cup Q_{i,j}^8 \cup Q_{i,j}^9)\setminus \{q_{i,j}^{|Q_{i,j}|-3k-2}\}$. (Recall that $q_{i,j}^{3k+2}$ is
the final vertex of $Q_{i,j}^3$ and that $q_{i,j}^{|Q_{i,j}|-3k-2}$ is the initial vertex of $Q_{i,j}^7$.)

Now consider a vertex $v\in \overline{V}_{long}\setminus V^0_{long}$, and let $i,j$ be such that $v\in V(Q_{i,j}^4\cup Q_{i,j}^6)$.
If $v\in R^{4,6}$, then $v$ is safe by Claim~3.3(iii). If $v\notin R^{4,6}$, then $(i,j)\notin I_{R}$. But by (Q1) all vertices in
$Q_{i,j}^1\cup Q_{i,j}^2 \cup Q_{i,j}^3$ (apart from possibly the final vertex of $Q_{i,j}^3$) are out-neighbours of $v$,
so $v$ has $k$ safe out-neighbours coloured $\alpha$ and $k$ safe out-neighbours coloured $\beta$.
Similarly, all vertices in $Q_{i,j}^7 \cup Q_{i,j}^8\cup Q_{i,j}^9$ (apart from possibly the initial vertex of $Q_{i,j}^7$) are in-neighbours of $v$,
so $v$ has $k$ safe in-neighbours coloured $\alpha$ and $k$ safe in-neighbours coloured $\beta$. Hence $v$ is safe.

Now consider any vertex $v\in V(Q_{i,j}^5)$. If $(i,j)\notin I_{R}$, a similar argument as above shows that $v$ is safe.
If $(i,j)\in I_{R}$, then by (Q1) all vertices in $Q_{i,j}^4$ (apart from possibly its final vertex) are out-neighbours of $v$, and
all vertices in $Q_{i,j}^6$ (apart from possibly its initial vertex) are in-neighbours of $v$.
Together with Claim~3.3(ii),(iii) this shows that $v$ has $k$ safe out-neighbours and $k$ safe in-neighbours of each colour. So $v$ is safe.
This completes the proof of Claim~3(iii)(a).

To check  Claim~3(iii)(c), note that if a vertex $v$ outside $C_4$ has an out-neighbour in $C^0$, then by (Q3) all vertices in
$Q_{i,j}^1\cup Q_{i,j}^2\cup Q_{i,j}^3\cup Q_{i,j}^4$ (apart from possibly the last two vertices of $Q_{i,j}^4$)
are out-neighbours of $v$. Thus $v$ has at least $k$ out-neighbours of each colour. In a similar way one can use (Q2) to show that
$v$ also has $k$ in-neighbours of each colour. This completes the proof of Claim~3.

\medskip

\noindent {\bf Claim 4:} \textit{We can colour all uncoloured vertices in such a way that every vertex is safe.}

\smallskip
To prove Claim~4, we colour all the vertices outside $C_4$ one by one. We first deal with all vertices in $E_A\setminus C_4$ (see STEP~1),
then we move to the vertices in $E_B\setminus C_4$ (see STEP~2). Finally, we colour all the remaining vertices (see STEP~3).
We let $Z_A:= \emptyset$. While dealing with each vertex in $E_A\setminus C_4$ in turn (i.e.~during STEP~1), we 
will update $Z_A$. At each substep, $Z_A$ will satisfy the following properties:
\begin{itemize}
\item[{\rm (a)}] $Z_A$ consists of coloured vertices and $Z_A\cap (C_4\cup E_A)=\emptyset$,
\item[{\rm (b)}] every coloured vertex lies in $C_4\cup E_A\cup Z_A$,
\item[{\rm (c)}] $|Z_A|\leq 2k|E_A|$.
\end{itemize}

\medskip

\noindent STEP 1. \textit{We can colour all vertices in $E_A\setminus C_4$ as well as some set $Z_A$ of additional vertices in such a way that all the
vertices in $E_A\setminus C_4$ are backwards-safe and alternating-backwards-safe and $Z_A$ satisfies (a)--(c).}

\smallskip  Consider each vertex $v\in E_A\setminus C_4$ in turn. Suppose first that $v$ has $2k$ uncoloured in-neighbours
$v_1,v_2,\dots, v_{2k}$ outside $E_A$. We colour $k$ of them by $\alpha$ and $k$ of them by $\beta$ and
replace $Z_A$ by $Z_A\cup \{v_1,v_2,\dots, v_{2k}\}$. We also colour $v$ with $\alpha$. Note that (S2) implies that $v_1,v_2,\dots, v_{2k}$ are
backwards-safe and alternating-backwards-safe. Together with (S3) and (S4) this shows that $v$ is backwards-safe and alternating-backwards-safe.

So suppose that $v$ has less than $2k$ uncoloured in-neighbours outside $E_A$.
Recall from Claim~3(iii)(b) that at most $3\cdot 10^7k^6\log(2k)$ vertices in $C_4$ lie outside the set~$C^0$.
Together with~(\ref{referee equation E_A}) and (c) this shows that
$$\hat{\delta}^-(T)-|E_A\cup Z_A|  \ge \hat{\delta}^-(T)-3k|E_A| \ge  5\cdot 10^8k^6\log(2k) \ge 2k+|C_4\setminus C^0|.$$
Since all coloured vertices lie in $C_4\cup E_A\cup Z_A$, this implies that $v$ has an in-neighbour in $C^0$. But now Claim~3(iii)(c) implies that $v$ has
$k$ in-neighbours of colour $\alpha$ and $k$ in-neighbours of colour $\beta$ in $C_4$. Since all the vertices in $C_4$ are safe,
this implies that $v$ becomes backwards-safe and alternating-backwards-safe by colouring $v$ with $\alpha$.

\smallskip

Note that we add at most $2k$ vertices to $Z_A$ for each vertex $v\in E_A\setminus C_4$. So at the end of STEP~1,
we will still have that $|Z_A|\leq 2k|E_A|$. Since by (S2) every vertex outside $E_B$ is
forwards-safe and alternating-forwards-safe, after STEP~1, all vertices in $E_A\setminus E_B$ will be safe, while
the vertices in $E_A\cap E_B$ might only be backwards-safe and alternating-backwards-safe.

Let $Z_B:=\emptyset$. While dealing with each vertex in $E_B\setminus C_4$ in turn during STEP~2, we 
will update $Z_B$. At each substep, $Z_B$ will satisfy the following properties (where $Z:=Z_A\cup Z_B$):
\begin{itemize}
\item[{\rm (a$'$)}] $Z_B$ consist of coloured vertices and $Z_B\cap (C_4\cup E\cup Z_A)=\emptyset$,
\item[{\rm (b$'$)}] every coloured vertex lies in $C_4\cup E\cup Z$,
\item[{\rm (c$'$)}] $|Z_B|\leq 2k|E_B|$ and so $|Z|\le 4k|E|$.
\end{itemize}

\medskip

\noindent STEP 2.  \textit{We can colour all uncoloured vertices in $E_B\setminus C_4$ as well as some set $Z_B$ of additional vertices in such a way that all the
vertices in $E_B\setminus C_4$ are safe and $Z_B$ satisfies (a$'$)--(c$'$).}

\smallskip
Consider each vertex $v\in E_B\setminus C_4$ in turn. If $v\notin E_A$, then $v$ is backwards-safe and alternating-backwards-safe by (S2).
If $v\in E_A$, then by STEP~1 $v$ is also backwards-safe and alternating-backwards-safe.

Suppose first that $v$ has $2k$ uncoloured out-neighbours $v_1,v_2,\dots, v_{2k}$ outside $E$. We colour $k$ of them by
$\alpha$ and $k$ of them by $\beta$. We replace $Z_B$ by $Z_B\cup \{v_1,v_2,\dots, v_{2k}\}$. If $v$ is uncoloured, we
colour $v$ with $\alpha$. Then (S2)--(S4) together imply that $v$ becomes safe.

So suppose that $v$ has less than $2k$ uncoloured out-neighbours outside $E$. Note that
$$\hat{\delta}^+(T)-|E\cup Z|  \ge \hat{\delta}^+(T)-5k|E| \ge  5\cdot 10^8k^6\log(2k) \ge 2k+|C_4\setminus C^0|$$
by~(\ref{referee equation E}), (c$'$) and Claim~3(iii)(b). Since all coloured vertices lie in $C_4\cup E\cup Z$,
this implies that $v$ has an out-neighbour in $C^0$. But now Claim~3(iii)(c) implies that $v$ has
$k$ out-neighbours of colour $\alpha$ and $k$ out-neighbours of colour $\beta$ in $C_4$. Since all the vertices in $C_4$ are safe,
this implies that $v$ becomes safe by colouring $v$ with $\alpha$ (in case $v$ is still uncoloured).

\smallskip

Note that we add at most $2k$ vertices to $Z_B$ for each vertex in $E_B\setminus C_4$.
Thus we always have that $|Z_B|\leq 2k|E_B|$ and so $|Z|\leq 4k|E|$.
After STEP~2, all vertices in $C_4\cup E$ are safe.

\medskip

\noindent STEP 3.  \textit{By colouring all the remaining uncoloured vertices with $\alpha$, every vertex becomes safe.}

\smallskip
This follows immediately from~(S2).

\medskip

This completes the proof of Claim~4 and thus of Theorem~\ref{main theorem}. \endproof

\section{Spanning linkedness and non-separating subdivisions} \label{linkage}

The following lemma generalizes a result of Thomassen~\cite{CTt}.
Theorems~\ref{thm:subd} and~\ref{cor:spanningsubd} then both follow easily by an inductive application of Lemma~\ref{path-deletion}.

\begin{lemma} \label{path-deletion}
Let $k,d$ be nonnegative integers.
Let $x,y,z_1,\dots, z_d$ be any distinct vertices in a strongly $(k+d+4)$-connected tournament $T$ and let $P$ be a shortest $xy$-path in $T-\{z_1,\dots,z_d\}$. Then $T-(V(P)\setminus X)$ is strongly $k$-connected for any (possibly empty) subset $X\subseteq\{x,y\}$.
\end{lemma}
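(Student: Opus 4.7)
The plan is to proceed by contradiction: assume $T' := T - (V(P)\setminus X)$ is not strongly $k$-connected, so there is a separator $S \subseteq V(T')$ with $|S| \le k-1$ and a partition $V(T') \setminus S = A \sqcup B$ with both $A,B$ non-empty and no edges from $A$ to $B$ in $T$. Since $T$ is strongly $(k+d+4)$-connected and we are removing only the $\le k - 1 + d$ vertices in $S \cup \{z_1,\dots,z_d\}$, Menger's theorem gives, for any chosen $a\in A$ and $b\in B$, at least five internally disjoint $b\to a$ paths in $T - (S\cup\{z_1,\dots,z_d\})$. Each of these paths must use at least one vertex of $V(P)\setminus X$, since by construction $T - (S\cup\{z_1,\dots,z_d\})$ restricted to $A\cup B$ still contains no $A\to B$ edge.

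Next I would exploit the minimality of $P = v_0v_1\cdots v_\ell$ in $T - \{z_1,\dots,z_d\}$ to pin down the structure of each $w \in V(T)\setminus (V(P)\cup\{z_1,\dots,z_d\})$ with respect to $P$. Define
\[
a(w) := \min\{\,i : v_iw \in E(T)\,\}
\]
(with $a(w):=\ell+1$ if no such $i$ exists). If there were any edge $wv_j$ with $j \ge a(w)+2$, then $v_0\cdots v_{a(w)}\,w\,v_j\cdots v_\ell$ would be an $xy$-walk of length strictly less than $\ell$ in $T-\{z_1,\dots,z_d\}$, contradicting the choice of $P$. Hence every out-neighbour of $w$ on $P$ has index $\le a(w)+1$, and symmetrically every in-neighbour has index $\ge a(w)$; in particular the sets $A$ and $B$ are both organized coherently along the index axis of $P$.

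The heart of the argument is to combine these two ingredients. Take the five internally disjoint $b\to a$ paths $Q_1,\dots,Q_5$ from above, and for each $Q_r$ let $v_{i_r}$ be its \emph{last} vertex on $V(P)\setminus X$ and $v_{j_r}$ its \emph{first} vertex on $V(P)\setminus X$, so that $Q_r$ decomposes as $b \rightsquigarrow v_{j_r} \cdots v_{i_r} \rightsquigarrow a$, where the two segments lie in $B$-side and $A$-side of $T$ respectively (apart from their endpoints in $V(P)$). The entry-index constraint forces $a(\text{exit vertex of }Q_r \text{ into }A) \ge i_r$ and $a(\text{entry vertex of }Q_r\text{ from }B) \le j_r+1$, so $j_r \le i_r+1$ along each $Q_r$. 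Then two of the $Q_r$ can be spliced at an edge of $P$ to produce an $xy$-path in $T-\{z_1,\dots,z_d\}$ strictly shorter than $P$, which is the required contradiction. Having five disjoint paths rather than one is what lets us avoid the (at most two) vertices of $X$ and any accidental coincidences among the $v_{i_r},v_{j_r}$.

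I expect the main obstacle to be the splicing case analysis: one must verify that two paths $Q_r,Q_s$ can always be chosen whose indices $i_r,j_s$ (or $i_s,j_r$) are ordered so that a shortcut edge of $P$ can be inserted, while simultaneously avoiding $X$ and the $z_i$'s and ensuring the resulting object is a genuine path rather than a walk. Careful bookkeeping of the index ordering, plus the fact that five disjoint bypasses give us slack of at least three beyond $|X|+d$, will make this work and complete the contradiction, establishing that $T-(V(P)\setminus X)$ is strongly $k$-connected.
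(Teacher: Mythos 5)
Your setup (a separating triple, five internally disjoint Menger paths avoiding $S\cup\{z_1,\dots,z_d\}$, and the structure that minimality of $P$ imposes on neighbours of off-path vertices) is the same framework as the paper's proof, but two things go wrong. First, a direction error: if the separation has \emph{no edges from $A$ to $B$}, then every edge between $B$ and $A$ is oriented from $B$ to $A$, so a $b\to a$ path can simply be the single edge $ba$ and need not meet $V(P)\setminus X$ at all; your claim that each of the five $b\to a$ paths must use a vertex of $P$ is false as stated. The five paths must be routed in the \emph{blocked} direction (in the paper: from $S_2$ to $S_1$, where no $S_2\to S_1$ edges exist), and it is exactly this orientation of the separation that is exploited later. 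Second, and more seriously, the final step is missing the idea that actually produces a shorter $xy$-path. The paper's shortcut is $x_0\dots x_q\,v\,u\,x_p\dots x_m$: it re-uses $P$ itself except for one middle block, detours through exactly two off-path vertices $v\in S_1$, $u\in S_2$, and the edge $vu$ exists precisely because all $S_1$--$S_2$ edges go from $S_1$ to $S_2$. Strict shortening then follows from $p\ge q+4$, where $p$ is the largest index receiving an edge from $S_2$ and $q$ the smallest index sending an edge to $S_1$; this is where the five paths are used (their traces on $P$ are disjoint and, by backwards-transitivity of shortest paths, each trace is a segment, a backward chord, or a three-vertex detour, so one of the five must enter $P$ at an index at least $q+4$). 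Your proposed conclusion, ``two of the $Q_r$ can be spliced at an edge of $P$,'' does not do this: the $Q_r$ run between the two sides of the separation, not between $x$ and $y$, they may contain arbitrarily many vertices outside $V(P)$, and the per-path inequality $j_r\le i_r+1$ gives no length saving by itself. Without the cross edge from the $S_1$-side to the $S_2$-side and the max-entry/min-exit comparison across the whole family, no contradiction with the minimality of $P$ is reached.

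Two smaller points. Your minimality deduction has an off-by-one: from $wv_j\in E(T)$ and $v_{a(w)}w\in E(T)$ the rerouted path has length $a(w)+2+\ell-j$, so minimality only forbids $j\ge a(w)+3$; out-neighbours of $w$ on $P$ can have index up to $a(w)+2$, which matters for the kind of bookkeeping you intend. Also, the paper sidesteps the set $X$ entirely by first reducing to the case $X=\emptyset$ (if $P$ has length at least three, it suffices to delete all of $V(P)$, replacing the endpoints $x,y$ by $x_1,x_{m-1}$ if necessary); your alternative plan of using the surplus of five paths to dodge the at most two vertices of $X$ is plausible but is not carried out, and it interacts with the direction issue above, since $X$ may lie in $A\cup B\cup S$.
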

\begin{proof}
Write $P:= x_0x_1\ldots x_m$ with $x=x_0$ and $y=x_m$. Note that $P$ must be a backwards-transitive path.
If $P$ has length at most two, the result trivially holds. So suppose that $P$ has length more than two.
Note that in this case it suffices to show that $T-V(P)$ is strongly $k$-connected 
(otherwise we consider $x'\in\{x,x_1\}, y'\in \{y,x_{m-1}\}$ and proceed through the argument with $x',y'$ playing the role of $x,y$).
So suppose $T-V(P)$ is not strongly $k$-connected. 
Then there exist a partition of $V(T) \setminus V(P)$ into nonempty sets $S,S_1,S_2$ such that $|S|\leq k-1$ and no vertex in $S_2$ sends an edge to $S_1$. 
Since $T-(S\cup\{z_1,\dots,z_d\})$ is strongly $5$-connected, there are five paths $P_1,\dots, P_5$ from $S_2$ to $S_1$ which are internally disjoint and do not intersect $S\cup \{z_1,\dots, z_d\}$. 
We may assume that the $P_i$  are backwards-transitive. Moreover, the interior of each $P_i$ is nonempty and is contained in $V(P)$.
Altogether, this means that the intersection of $P_i$ and $T[V(P)]$ is either a segment of $P$ or a path of the form $x_jx_{\ell}$ with $j\geq \ell+2$ or 
of the form $x_jx_{j+1}x_{j-1}$ or $x_{j}x_{j-2}x_{j-1}$. 
We let $p$ be the largest number such that some $P_i$ contains an edge $ux_p$ from $S_2$ to $x_p$ 
and we let $q$ be the smallest number such that some $P_i$ contains an edge $x_qv$ from $x_i$ to $S_1$. 
Note that $p \ge q+4$.
Then the path obtained from $P$ by deleting $x_{q+1}x_{q+2}\dots x_{p-1}$ and adding $x_qvux_p$ is
shorter than $P$, a contradiction.\end{proof}

\removelastskip\penalty55\medskip\noindent{\bf Proof of Theorem~\ref{thm:subd}.}
Write $D=\{w_1,\dots, w_d\}$.
We proceed by induction on $m$. For $m=1$, the assertion holds by Lemma~\ref{path-deletion} applied with $d-2$ playing the role of~$d$.
Suppose that $m \ge 2$ and that the assertion holds for $m-1$. 
Consider any edge $uv \in E(H)$. Without loss of generality, we may assume that $\phi(u)=w_1$ and $\phi(v)=w_2$. 
Then we apply Lemma \ref{path-deletion} (with $d-2$ playing the role of $d$)
to find a $w_1w_2$-path $P$ whose interior does not intersect $D$ and so that $T':=T-V({\rm int}(P))$ is strongly $(k+(m-1)(d+2))$-connected. 
Now by the induction hypothesis, we can find a subdivision $H_*$ of $H\setminus\{uv\}$ in $T'$
which satisfies (i)--(iii) (with $T'$ playing the role of $T$). 
Finally, let $H^* := H_* \cup {\rm int}(P)$.
Then $H^*$ satisfies all requirements.
\endproof

\removelastskip\penalty55\medskip\noindent{\bf Proof of Theorem~\ref{cor:spanningsubd}.}
We proceed by induction on $k$. 
For $k=1$, the assertion was proven by Thomassen~\cite{CTh}. 
Assume that $k \ge 2$ and that the assertion holds for $k-1$.  Let $Z:=\{x_1,\dots, x_{k-1}, y_1,\dots, y_{k-1}\}$ and
let $X:=\{x_k, y_k\} \cap Z$.
We can now apply Lemma \ref{path-deletion} with $d=|Z\setminus X|$ to a find a $x_ky_k$-path $P$ avoiding 
$Z \setminus X$ so that $T[W]$ is strongly $((k-1)^2+3(k-1))$-connected, 
where $W:=V(T) \setminus (V(P)\setminus X)$. 
Now by the induction hypothesis, we can find $P_1, \dots, P_{k-1}$ in $T[W]$ so that $P_i$ is a path from $x_i$ to $y_i$ and
$W = \bigcup_{i=1}^{k-1} V(P_i)$. 
Let $P_k:=P$. Then $P_1,\dots, P_k$ are as desired.
\endproof

\section{Acknowledgements}

We are grateful to J{\o}rgen Bang-Jensen for helpful comments on an earlier version of this paper.

\medskip

{\footnotesize \obeylines \parindent=0pt

Jaehoon Kim, Daniela K\"uhn, Deryk Osthus
School of Mathematics
University of Birmingham
Edgbaston
Birmingham
B15 2TT
UK
}
\begin{flushleft}
{\it{E-mail addresses}:}
{\tt{\{kimJS, d.kuhn, d.osthus\}@bham.ac.uk}}
\end{flushleft}

\end{document}